\documentclass[11pt]{article}

\usepackage[centertags]{amsmath}
\usepackage{amsfonts}
\usepackage{amssymb}
\usepackage{amsthm}
\usepackage{newlfont}
\usepackage{bibspacing}
\usepackage{verbatim}

\newtheorem{thm}{Theorem}[section]
\newtheorem*{thm*}{Theorem}
\newtheorem{cor}[thm]{Corollary}

\newtheorem{lem}[thm]{Lemma}

\newtheorem{prop}[thm]{Proposition}
\newtheorem*{prop*}{Proposition}

\newtheorem*{conj*}{Conjecture}

\newtheorem*{dfn*}{Definition}
\theoremstyle{definition}
\newtheorem{rem}[thm]{\textbf{Remark}}
\newtheorem*{rmk*}{Remark}
\newtheorem*{fact*}{Fact}

\theoremstyle{proof}

\newcommand{\vol}{\textrm{Vol}}
\newcommand{\norm}[1]{\left\Vert#1\right\Vert}

\newcommand{\abs}[1]{\left\vert#1\right\vert}
\newcommand{\set}[1]{\left\{#1\right\}}
\newcommand{\brac}[1]{\left(#1\right)}
\newcommand{\scalar}[1]{\left \langle #1 \right \rangle}

\newcommand{\Real}{\mathbb{R}}
\newcommand{\Complex}{\mathbb{C}}

\newcommand{\eps}{\varepsilon}
\newcommand{\E}{\mathbb{E}}
\renewcommand{\S}{\mathbb{S}}
\newcommand{\K}{\mathcal{K}}
\newcommand{\I}{\mathcal{I}}

\renewcommand{\P}{\mathbb{P}}
\renewcommand{\H}{\mathcal{H}}

\def \F {\mathcal{F}}

\newlength{\defbaselineskip}
\setlength{\defbaselineskip}{\baselineskip}
\newcommand{\setlinespacing}[1]           {\setlength{\baselineskip}{#1 \defbaselineskip}}

\numberwithin{equation}{section}

\oddsidemargin 0.4in \evensidemargin 0.4in \marginparwidth 40pt
\marginparsep 10pt \topmargin 0pt \headsep .5in \textheight 8.1in
\textwidth 5.8in
\footskip 0.5in

\begin{document}

\title{A Proof of Bobkov's Spectral Bound For Convex Domains via Gaussian Fitting and Free Energy Estimation}
\author{Emanuel Milman\textsuperscript{1}}
\date{}

\footnotetext[1]{Department of Mathematics,
Technion - Israel Institute of Technology, Haifa 32000, Israel. Supported by ISF, BSF, GIF and the Taub Foundation (Landau Fellow).
Email: emilman@tx.technion.ac.il.}

\maketitle

\begin{abstract}
We obtain a new proof of Bobkov's lower bound on the first positive eigenvalue of the (negative) Neumann Laplacian (or equivalently, the Cheeger constant) on a bounded convex domain $K$ in Euclidean space. Our proof avoids employing the localization method or any of its geometric extensions. Instead, we deduce the lower bound by invoking a spectral transference principle for log-concave measures, comparing the uniform measure on $K$ with an appropriately scaled Gaussian measure which is conditioned on $K$. The crux of the argument is to establish a good overlap between these two measures (in say the relative-entropy or total-variation distances), which boils down to obtaining sharp lower bounds on the free energy of the conditioned Gaussian measure. 
\end{abstract}

\section{Introduction}

The following theorem was proved by Sergey Bobkov in \cite{BobkovVarianceBound} (we use the formulation from \cite{EMilman-RoleOfConvexity}, which is formally stronger but ultimately equivalent in the cases of interest):

\begin{thm}[Bobkov] \label{thm:Bobkov}
Let $K$ denote a convex bounded domain in Euclidean space $(\Real^n,\abs{\cdot})$.
Let $X$ denote a random vector uniformly distributed in $K$ (with respect to normalized Lebesgue measure). Then:
\begin{equation} \label{eq:Bobkov}
 D_{Che}(K) \geq \sup_{x_0 \in \Real^n} \frac{c}{\sqrt{\E(|X-x_0|) \S(|X-x_0|)}} ~,
\end{equation}
for some universal constant $c>0$.
\end{thm}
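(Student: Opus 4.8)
The plan is to deduce \eqref{eq:Bobkov} by comparing the uniform measure on $K$ with a suitably scaled Gaussian conditioned on $K$, via transference, the point being a sharp lower bound on the free energy of the conditioned Gaussian. Since the right-hand side of \eqref{eq:Bobkov} is a supremum over $x_0$, I would fix one $x_0$ and, after translating, take $x_0=0$; write $m:=\E(\abs{X})$, $s:=\S(\abs{X})$ and let $\mu$ denote the law of $X$ (the uniform measure on $K$). It suffices to prove $D_{Che}(K)\ge c/\sqrt{ms}$. Fix a large universal constant $A$, set $\sigma^2:=A\,ms$, and let $\tilde\gamma$ be the Gaussian of covariance $\sigma^2\,\mathrm{Id}$ conditioned on $K$, i.e. $d\tilde\gamma/d\mu=e^{-W}/Z$ with $W:=\abs{X}^2/(2\sigma^2)$ and $Z:=\E_\mu(e^{-W})$. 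As a log-concave measure times a log-concave weight, $\tilde\gamma$ is log-concave with Lebesgue density $e^{-V}$, $\nabla^2V\ge\sigma^{-2}\,\mathrm{Id}$; so by the Bakry--Ledoux comparison theorem it obeys the Gaussian isoperimetric inequality with parameter $\sigma^{-1}$, and in particular has sub-Gaussian concentration at scale $\sigma$.

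Granting the overlap estimate
\begin{equation}\label{eq:plan-tv}
\norm{\mu-\tilde\gamma}_{\mathrm{TV}}\le \tfrac18 ,
\end{equation}
the transference step goes as follows: any Borel $B$ with $\mu(B)\ge\tfrac12$ has $\tilde\gamma(B)\ge\tfrac38$, hence $\tilde\gamma(B_{C\sigma})\ge\tfrac78$ for a suitable universal $C$ (Gaussian concentration of $\tilde\gamma$), hence $\mu(B_{C\sigma})\ge\tfrac34$; thus every half-measure set $\mu$-expands to measure $\tfrac34$ within distance $C\sigma$, and E. Milman's equivalence, for log-concave measures, between this weak concentration and a Cheeger-type isoperimetric inequality \cite{EMilman-RoleOfConvexity} yields $D_{Che}(\mu)\ge c_0/(C\sigma)=c/\sqrt{ms}$. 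Taking the supremum over $x_0$ then gives the theorem, so everything rests on \eqref{eq:plan-tv}.

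To prove \eqref{eq:plan-tv} --- this is the crux --- I would bound the relative entropy and invoke Pinsker: it is enough that $D(\tilde\gamma\,\|\,\mu)\le\tfrac1{32}$. With $\mathcal F(\sigma):=-\log Z$ the free energy of the conditioned Gaussian one has $D(\tilde\gamma\,\|\,\mu)=\mathcal F(\sigma)-\E_{\tilde\gamma}(W)$, and, introducing the tilted log-concave measures $d\nu_t\propto e^{-tW}\,d\mu$ (so $\nu_0=\mu$, $\nu_1=\tilde\gamma$), a one-variable convexity computation along $t$ gives the identity
\begin{equation}\label{eq:plan-kl}
D(\tilde\gamma\,\|\,\mu)=\frac{1}{4\sigma^4}\int_0^1 t\;\mathrm{Var}_{\nu_t}\!\brac{\abs{X}^2}\,dt .
\end{equation}
Since $\sigma^4=A^2m^2s^2$, \eqref{eq:plan-tv} reduces to the free-energy estimate $\mathrm{Var}_{\nu_t}(\abs{X}^2)\le C_1\,m^2s^2$ for $0\le t\le 1$, with $C_1$ universal (then choose $A$ with $C_1/(4A^2)\le\tfrac1{32}$); equivalently, that with the Gaussian fitted to the right width the free energy $\mathcal F(\sigma)$ is within $O(A^{-2})$ of its trivial Jensen upper bound $\E_\mu(W)=\E_\mu(\abs{X}^2)/(2\sigma^2)$.

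For the variance estimate I would first invoke the Khinchin-type inequality for the Euclidean norm under a log-concave measure, $\E_\mu(\abs{X}^2)\le C(\E_\mu\abs{X})^2$ (i.e. $s\lesssim m$, and similarly for the $\nu_t$ up to a universal loss); combined with the elementary expansion $\mathrm{Var}(\abs{X}^2)=4(\E\abs{X})^2\mathrm{Var}(\abs{X})+4\,\E\abs{X}\cdot\E(\abs{X}-\E\abs{X})^3+\mathrm{Var}((\abs{X}-\E\abs{X})^2)$, this reduces the input to \eqref{eq:plan-kl} to the two one-dimensional bounds $\mathrm{Var}_{\nu_t}(\abs{X})\le C s^2$ and $\E_{\nu_t}(\abs{X}-\E_{\nu_t}\abs{X})^4\le C(\mathrm{Var}_{\nu_t}\abs{X})^2$. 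Both concern the law of $\abs{X}$ under $\nu_t$, whose density on $(0,\infty)$ is proportional to $\rho^{n-1}g(\rho)e^{-t\rho^2/(2\sigma^2)}$, where $\rho^{n-1}g(\rho)$ is the $\mu$-density of $\abs{X}$; here convexity supplies the structural fact that (when $0\in K$) $g$ is non-increasing --- the measure of directions $\theta$ with $\rho\theta\in K$ decreases in $\rho$ --- so the $\nu_t$-law of $\abs{X}$ is $\rho^{n-1}$ times a non-increasing function, for which the heavy weight $\rho^{n-1}$ forces concentration that the comparatively mild (once $\sigma^2\asymp ms$) Gaussian tilt cannot spoil; this is precisely why the bound is governed by $s$ rather than $m$, and the position of $x_0$ outside $K$ is reduced to this case in a routine way. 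The genuinely hard point is exactly this free-energy estimate: the off-the-shelf Poincaré/Brascamp--Lieb bound $\mathrm{Var}_{\nu_t}(\abs{X}^2)\le(4\sigma^2/t)\,\E_{\nu_t}(\abs{X}^2)$ overshoots the truth by a factor of order $n$ already for the Euclidean ball, and must be replaced by the sharp analysis of these one-dimensional tilted radial measures.
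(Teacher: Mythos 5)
Your overall architecture is exactly the paper's: fit a Gaussian of the scale $\sigma^2\simeq \E(\abs{X})\,\S(\abs{X})$, condition it on $K$, show the two measures overlap, and transfer the Gaussian isoperimetric/concentration bound back to $\mu_K$ via the stability results for log-concave measures. The transference step you describe (Pinsker, then sub-Gaussian concentration of the conditioned Gaussian plus the equivalence of weak concentration and Cheeger for log-concave measures) is correct, and your identity \eqref{eq:plan-kl} is a correct de Bruijn-type computation ($D(\nu_1\|\nu_0)=\int_0^1 t\,\psi''(t)\,dt$ for $\psi(t)=\log\E_\mu e^{-tW}$). But the proof is not complete: everything has been funneled into the claim $\mathrm{Var}_{\nu_t}(\abs{X}^2)\le C m^2 s^2$ uniformly in $t\in[0,1]$, and this --- which you yourself call ``the genuinely hard point'' --- is asserted, not proved. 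The specific gaps in your sketch are: (i) the bound $\mathrm{Var}_{\nu_t}(\abs{X})\le Cs^2$ with $s$ the variance parameter of the \emph{untilted} measure is nontrivial, because the tilt $e^{-t\abs{x}^2/(2\sigma^2)}$ is not a small perturbation --- its exponent varies by order $m/(As)\gtrsim\sqrt{n}/A$ over the range of $\abs{X}$, so it massively suppresses the region $\abs{X}\ll m$, and one must play this against the exponential decay (at scale $s$) of the lower tail of $\abs{X}$, which comes from Brunn--Minkowski plus Chebyshev; (ii) the fourth-moment bound $\E_{\nu_t}(\abs{X}-\E\abs{X})^4\le C(\mathrm{Var}_{\nu_t}\abs{X})^2$ is not an instance of the reverse-H\"older inequalities for polynomials ($\abs{x}-E$ is not a polynomial), and again hinges on two-sided exponential tail control of $\abs{X}$ at scale $s$ under every $\nu_t$; (iii) the monotonicity of $g$ requires $x_0\in K$, whereas the theorem must be proved for every $x_0$, and the ``routine'' reduction is not supplied.

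For comparison, the paper sidesteps all of this by running the entropy in the opposite direction: $H(\mu_K\,|\,\gamma_K^w)=\int\log\brac{d\mu_K/d\gamma_K^w}\,d\mu_K=\frac{w}{2}E_2^2-Z_K(w)$ is \emph{exactly} computable, so no tilted-measure variances, third or fourth moments, or polynomial Khinchine inequalities are needed; the entire problem collapses to the single lower bound $Z_K(w)/w\ge\frac12 E_2^2-CE_2S$ for $w\le c/(E_2S)$. That bound is then proved by one integration by parts, $\int e^{-\frac{w}{2}\abs{x}^2}d\mu_K=\int_0^\infty wr\,e^{-\frac{w}{2}r^2}\mu_K\set{\abs{x}\le r}\,dr$, combined with the lower-tail estimate $\mu_K\set{\abs{x}\le r}\le\exp(-c_1(E_2-r)/S)$ (log-concavity of the radial distribution function via Brunn--Minkowski, plus Chebyshev) and a careful Gaussian integral estimate in which the choice $w=c_1/(\alpha E_2 S)$ with $\alpha>1$ strictly is essential. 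That computation is precisely the quantitative content your sketch is missing; if you carry out your route, you will find yourself reproving exactly this tail-versus-tilt estimate for each $\nu_t$. I would recommend either completing that analysis or, more efficiently, switching to the direction $H(\mu_K\,|\,\gamma_K^w)$ and attacking $Z_K(w)$ directly.
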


Let us explain the notation used above. We denote the expectation of a random variable $Y$ by $\E(Y)$, and set $\S(Y) := \sqrt{\E((Y - \E(Y))^2)}$ to denote the square root of the variance. We use $D_{Che}(\Omega)$ to denote the Cheeger constant of the domain $\Omega \subset (\Real^n,\abs{\cdot})$, defined as:
\[
D_{Che}(\Omega) := \inf_{A \subset \Omega} \frac{\H^{n-1}(\partial A \cap \Omega)}{\min(\H^{n}(A),\H^{n}(\Omega \setminus A))} ~,
\]
where the infimum ranges over all Borel subsets $A \subset \Omega$, and $\H^k$ denotes the $k$-dimensional Hausdorff measure. When $K$ is a convex domain, it is known by 
results of Maz'ya \cite{MazyaSobolevImbedding,MazyaCheegersInq1} and Cheeger \cite{CheegerInq} on one hand, and Ledoux \cite{LedouxSpectralGapAndGeometry} on the other, that $D_{Che}(K) \simeq \sqrt{\lambda_1(K)}$, where
$\lambda_1(\Omega)$ denotes the first positive eigenvalue of the (negative) Laplacian $-\Delta$ on $\Omega$ with Neumann boundary conditions. Here and elsewhere $A \simeq B$ signifies that there exist universal numeric constants $C_1,C_2 > 0$ so that $C_1 A \leq B \leq C_2 A$. Bobkov's Theorem may therefore be interpreted as a spectral bound for general convex domains. 

\medskip

Of course, classical lower bounds on $\lambda_1(K)$ are known for convex domains $K$, and we mention the sharp lower bound in Euclidean space due to Payne and Weinberger  \cite{PayneWeinberger}:
\begin{equation} \label{eq:PW}
\lambda_1(K) \geq \frac{\pi^2}{diam(K)^2} ~.
\end{equation}
This was extended by Li and Yau \cite{LiYauEigenvalues} and Zhong and Yang  \cite{ZhongYangImprovingLiYau} to locally convex domains with smooth boundary on Riemannian manifolds with non-negative Ricci curvature.
Back to the Euclidean setting, the bound (\ref{eq:PW}) was extended (using the equivalent $D_{Che}(K)$ parameter) by Kannan, Lov\'asz and Simonovits \cite{KLS} to the  bound:
\begin{equation} \label{eq:KLS}
 D_{Che}(K) \geq \sup_{x_0 \in \Real^n} \frac{\log 2}{\E(|X-x_0|)} ~,
\end{equation}
requiring control over the average distance to the centroid $x_0$ rather than control over the diameter $diam(K)$. To be completely accurate, the bound obtained in \cite{KLS} was for a slightly different isoperimetric parameter, which is equivalent to within a factor of $2$ to the $D_{Che}$ parameter; however, a simple modification of the argument (as in \cite[Section 9]{BobkovKappaConcaveMeasures}) yields the asserted (\ref{eq:KLS}). 
The latter bound was extended to the Riemannian setting (with a different numerical constant) in \cite{EMilmanIsoperimetricBoundsOnManifolds}.

By testing very elongated $n$-dimensional cylinders or cones essentially degenerating to one-dimensional densities (and taking the limit as $n \rightarrow \infty$ in the latter case), it is easy to verify that $\pi^2$ and $\log 2$ are the best possible (dimension independent) constants one can use in (\ref{eq:PW}) and (\ref{eq:KLS}), respectively. 
However, assuming that such ``biased" or ``degenerate" domains are prohibited (for instance, a natural non-degeneracy condition is the isotropic condition, requiring that the variance of all unit linear functionals is equal to $1$), Bobkov's bound is in fact the best general bound known to date. Indeed, up to a numeric constant it is certainly better than (\ref{eq:KLS}), since 
$\S(|X-x_0|)^2 \leq \E(|X-x_0|^2) \leq C (\E|X-x_0|)^2$ for some universal constant $C>1$, where the last Khinchine-type inequality is a well known consequence of the convexity of $K$ (as follows from Borell's Lemma \cite{Borell-logconcave}, see \cite{Milman-Schechtman-Book}). And indeed, Bobkov's bound was used in \cite{GuedonEMilmanInterpolating} to deduce the currently best-known estimate on the Cheeger constant for general isotropic convex domains $K$ in $\Real^n$, namely $D_{Che}(K) \geq c n^{-5/12}$; a conjecture of Kannan--Lov\'asz--Simonovits \cite{KLS} asserts that the bound should be $D_{Che}(K) \geq c$, for some dimension independent constant $c > 0$.

\medskip

Bobkov's proof in \cite{BobkovVarianceBound} is based on a localization method,
having its origins in the work of Payne and Weinberger \cite{PayneWeinberger}, rediscovered by Gromov and V. Milman \cite{Gromov-Milman}, and
systematically developed by Kannan--Lov\'asz--Simonovits \cite{LSLocalizationLemma,KLS}.
A more geometric proof of Bobkov's theorem was given in \cite[Theorem 5.15]{EMilman-RoleOfConvexity}, which was based on a general bound on $D_{Che}(K)$ obtained by Kannan--Lov\'asz--Simonovits \cite{KLS} using again localization. The latter general bound was recently given a completely geometric proof in \cite{EMilmanIsoperimetricBoundsOnManifolds}, which generalizes to Riemannian manifolds with non-negative Ricci curvature. 

\medskip

In this work, we propose yet another alternative proof of Bobkov's Theorem \ref{thm:Bobkov} in the Euclidean setting. However, we believe that the method of proof is of independent interest, and leads to interesting questions on the distribution of mass on convex sets. Furthermore, amongst the known proofs of Theorem \ref{thm:Bobkov}, the approach described in this work is the only one which avoids directly employing the localization method (or its isoperimetric geometric extension, developed in \cite{EMilmanIsoperimetricBoundsOnManifolds}).

Our method is based on a comparison between the uniform probability measure on $K$, denoted $\mu_K$, and a suitably scaled Gaussian measure which we condition on $K$:
\[
\gamma_K^w := \exp(Z_K(w)) \exp(-\frac{w}{2} |x|^2) d\mu_K(x) ~, 
\]
where $Z_K(w) \geq 0$ is a normalization term ensuring that $\gamma_K^w$ is a probability measure. When $K$ has unit volume, this measure may be thought of as the Gibbs measure at inverse-temperature $w > 0$ associated with the Hamiltonian:
\[
H(x) := \begin{cases}   \frac{|x|^2}{2} & x \in K \\ +\infty & \text{otherwise}  \end{cases} ~,
\]
and so $\exp(-Z_K(w))$ represents the partition function and $Z_K(w) / w$ represents the system's (Helmholtz) free energy. We have learned from Dario Cordero-Erausquin that the idea of adding $w H(x)$ to a given potential and optimizing on $w > 0$ may be traced back to the work of H\"{o}rmander (e.g. \cite[Theorem 2.2.3]{HormanderL2EstimatesForComplexNeumannProblem}), who derived a generalized version of the Payne--Weinberger estimate (\ref{eq:PW}) for pseudo-convex domains in $\Complex^n$. 

Clearly, it is enough to prove the desired bound (\ref{eq:Bobkov}) for $x_0 = 0$, since $D_{Che}(\Omega)$ is invariant under translation of the domain $\Omega$. We consequently denote $E = E_0 = \E(|X|)$ and $S = S_0 = \S(|X|)$.  It turns out that when $w \leq \frac{c}{E S}$, for an appropriately chosen small constant $c > 0$, $\mu_K$ and $\gamma_K^w$ overlap rather well in the total-variation or relative entropy senses, and general transference principles from \cite{EMilman-RoleOfConvexity,EMilmanGeometricApproachPartII} imply that $\mu_K$ and $\gamma_K^w$ have comparable Cheeger constants (or equivalently, spectral-gaps).  Since $\gamma_K^w$ is well-known to inherit all the isoperimetric and spectral properties of the (non-conditioned) Gaussian measure $\gamma^w := \gamma^w_{\Real^n}$ (e.g. \cite{BakryEmery,BakryLedoux,CaffarelliContraction} or see below), a delicate estimation of the overlap yields the desired bound (\ref{eq:Bobkov}).

Intuitively it is clear that $w$ should be of the order of $\frac{1}{E S}$ to get a good overlap. Indeed, since $\mu_K$ is mostly concentrated (by Chebyshev's inequality) in the annulus $A := K \cap (B(E+2S) \setminus B(E-2S))$ (where $B(R)$ denotes the Euclidean ball of radius $R$), we would like the density of $\gamma_K^w$ to be almost uniform in $A$, yielding the requirement that $w ((E+2S)^2 - (E-2S)^2) \simeq 1$, i.e. that $w \simeq \frac{1}{ES}$. However, we also need to control $Z_K(w)$ to push this argument through, which turns out to be a rather delicate task. This constitutes the main part of this work, and leads to questions on the distribution of $|X|$ in $K$ which may be of independent interest. In particular, the crux of the argument relies on controlling from below the free energy $Z_K(w) / w$ for $w \leq \frac{c}{ES}$.

\medskip

The rest of this work is organized as follows. We recall some preliminaries in Section \ref{sec:2}, and provide the full justification to the intuitive claims made above in Section \ref{sec:3}. We conclude with Section \ref{sec:4}, where we demonstrate that our lower bound for the free energy is in fact sharp.
Unless otherwise stated, all constants $c,c',c'',C,C'$ etc. appearing in this work are universal numeric constants, whose values may change from one occurrence to the next. 

\medskip
\noindent \textbf{Acknowledgement.} I thank Dario Cordero-Erausquin for his interest, discussions and references, and for making me realize that there is something to prove. I also thank the organizers of the 2011 S\'eminaire de Math\'ematiques  Sup\'erieures for the invitation to give a mini-course and to present this work.

\section{Preliminaries} \label{sec:2}

\subsection{Isoperimetry, Spectral-Gap and Concentration} \label{subsec:interactions}

We start by extending our definitions of $D_{Che}$ and $\lambda_1$ to a general metric-measure space setting. A standard text-book reference for most of the notions we employ below is the excellent book by Ledoux \cite{Ledoux-Book} (see also \cite{EMilmanGeometricApproachPartII} for a general overview).  

Let $(\Omega,d,\mu)$ denote a measure-metric space, meaning that $(\Omega,d)$ is a separable metric space and $\mu$ is a Borel probability measure on $(\Omega,d)$. There are various known ways of measuring the interaction between the metric $d$ and the measure $\mu$. One of the strongest forms of measuring this interplay is given by isoperimetric inequalities. Recall that Minkowski's (exterior) boundary measure of a Borel set $A \subset \Omega$, denoted $\mu^+(A)$, is
defined as $\mu^+(A) := \liminf_{\eps \to 0} \frac{\mu(A^d_{\eps}) - \mu(A)}{\eps}$, where $A^d_{\eps} = A^{\Omega,d}_{\eps} := \set{x \in \Omega ; \exists y
\in A \;\; d(x,y) < \eps}$ denotes the $\eps$-neighborhood of $A$ in $(\Omega,d)$.
The isoperimetric profile $\I = \I_{(\Omega,d,\mu)}$ is then defined as the function $\I: [0,1] \rightarrow \Real_+$ given by $\I(v) = \inf \set{ \mu^+(A) ; \mu(A) = v}$. 
An isoperimetric inequality measures the relation between the boundary measure and the measure of a set, by providing a lower bound on $\I_{(\Omega,d,\mu)}$. In this work, we will only be interested in the Cheeger constant $D_{Che}$ of the space $(\Omega,d,\mu)$, defined as:
\[
D_{Che}(\Omega,d,\mu) := \inf_{v \in [0,1]} \frac{\I_{(\Omega,d,\mu)}(v)}{\min(v,1-v)} = \inf_{A \subset \Omega} \frac{\mu^+(A)}{\min(\mu(A),1-\mu(A))} ~,
\]
measuring a certain linear isoperimetric property of the space.
When $d$ and (or) $\mu$ are implied from the context, we simply write $D_{Che}(\Omega)$ or $D_{Che}(\mu)$. We will mostly work in Euclidean space $(\Real^n,|\cdot|)$, and so given a bounded domain $\Omega \subset \Real^n$, we denote $D_{Che}(\Omega) = D_{Che}(\Omega,|\cdot|,\mu_{\Omega})$, where $\mu_{\Omega}$ denotes the uniform probability measure on $\Omega$, and given a Borel probability measure $\nu$, denote $D_{Che}(\nu) = D_{Che}(\Real^n,|\cdot|,\nu)$. 
Note that in the former case, when $A \subset \Omega$ has smooth boundary, then $\mu_\Omega^+(A) = \H^{n-1}(\partial A \cap \Omega) / \H^n(\Omega)$. For the standard Gaussian measure $\gamma$ on $(\Real^n,|\cdot|)$, a classical result of Sudakov--Tsirelson \cite{SudakovTsirelson} and independently Borell \cite{Borell-GaussianIsoperimetry}, asserts that $\I_{\gamma} = \I_{(\Real^n,|\cdot|,\gamma)}= \varphi \circ \Phi^{-1}$, where $\varphi(t) := \frac{1}{\sqrt{2 \pi}} \exp(-t^2/2)$ and $\Phi(t) := \int_{-\infty}^t \varphi(s) ds$, corresponding to the fact that half-planes are isoperimetric minimizers of Gaussian boundary measure. It is elementary to verify that $\I_{\gamma}$ is a concave function on $[0,1]$ vanishing at the endpoints and symmetric about the point $1/2$, and so consequently $D_{Che}(\gamma) = 2 \I_{\gamma}(1/2) = \sqrt{2 / \pi}$, and by scaling $D_{Che}(\gamma^w) = \sqrt{2 / \pi} \sqrt{w}$ (see \cite{EMilman-RoleOfConvexity,EMilmanGeometricApproachPartI} for the concavity of the isoperimetric profile in more general spaces). 

Another way of measuring the interaction between metric and measure is given by Sobolev inequalities. In this work, we will only be interested in the Poincar\'e inequality. 
Let $\F = \F(\Omega,d)$ denote the space of functions which are Lipschitz on every ball in $(\Omega,d)$. Given $f \in \F$, define $|\nabla f|$ as the following Borel function:
\[
 \abs{\nabla f}(x) := \limsup_{d(y,x) \rightarrow 0+} \frac{|f(y) - f(x)|}{d(x,y)} ~.
\]
(and we define it as 0 if $x$ is an isolated point - see \cite[pp. 184,189]{BobkovHoudre} for more details). In the smooth Euclidean setting, $\abs{\nabla f}$ of course coincides with the Euclidean length of the gradient of $f$. We say that $(\Omega,d,\mu)$ satisfies a Poincar\'e inequality if:
\[
\exists \lambda_1 > 0 \;\;\; \forall f \in \F \;\;\; \int |\nabla f|^2 d\mu \geq \lambda_1 \brac{\int f^2 d \mu - \brac{\int f d\mu}^2} ~.
\]
The best possible constant $\lambda_1$ above, called the Poincar\'e or spectral-gap constant, is denoted $\lambda_1(\Omega,d,\mu)$. As usual, we will use $\lambda_1(\Omega)$ or $\lambda_1(\mu)$ when the space is implied from the context. Note that when $\Omega$ is a smooth domain in Euclidean space, $\lambda_1(\Omega)$ coincides with the first positive eigenvalue of the (negative) Laplacian $-\Delta$ on $\Omega$ with Neumann boundary conditions. For instance, it is well known (e.g. \cite{Ledoux-Book}) for the standard Gaussian measure $\gamma$ on $(\Real^n,|\cdot|)$ that $\lambda_1(\gamma) = 1$, and hence by scaling $\lambda_1(\gamma^w) = w$.

As already alluded to in the Introduction, there is an intimate relation between $D_{Che}$ and $\lambda_1$. It was shown by Cheeger \cite{CheegerInq} and independently by Maz'ya \cite{MazyaSobolevImbedding,MazyaCheegersInq1} that $\sqrt{\lambda_1(\Omega,d,\mu)} \geq \frac{1}{2} D_{Che}(\Omega,d,\mu)$ (their proof extends from the Euclidean or Riemannian setting to the general metric-measure space one, cf. \cite{EMilman-RoleOfConvexity}). The converse inequality is in general false, due to the possible existence of narrow ``necks" in the space (see e.g. \cite{EMilman-RoleOfConvexity}). However, it was shown by Buser \cite{BuserReverseCheeger} for the case of a closed Riemannian manifold having \emph{non-negative} Ricci curvature, and extended by Ledoux \cite{LedouxSpectralGapAndGeometry} to the case of a manifold-with-density having \emph{non-negative} generalized Ricci curvature, that the converse inequality also holds up to a universal numeric constant. We do not provide unnecessary definitions here and only mention that in particular, in the case of Euclidean space $(\Real^n,|\cdot|)$ endowed with a log-concave probability measure $\mu$,  it follows that $\sqrt{\lambda_1(\mu)} \simeq D_{Che}(\mu)$.  
Recall that a measure $\mu$ on $\Real^n$ is called log-concave if $\mu = \exp(-V(x)) dx$ with $V : \Real^n \rightarrow \Real \cup \set{+\infty}$ convex (cf. Borell \cite{Borell-logconcave}). Note that all of the probability measures mentioned in the Introduction $\mu_K$, $\gamma^w_K$ and $\gamma^w$ are log-concave. 

A third way of measuring the interaction between metric and measure is given by concentration inequalities. The concentration profile of our space $\K = \K_{(\Omega,d,\mu)} :\Real_+ \rightarrow [0, 1/2]$ is defined as the pointwise minimal function so that $1 - \mu(A^d_r) \leq \K(r)$ for all Borel sets $A \subset \Omega$ with $\mu(A) \geq 1/2$ ($r \geq 0$). Clearly $\K$ is a non-increasing function.  Concentration inequalities measure how tightly the measure $\mu$ is concentrated around sets having measure $1/2$ as a function of the distance $r$ away from these sets, by providing an upper bound on $\K(r)$ which decays to $0$ as $r \rightarrow \infty$. To measure this decay, we denote by $D_{Exp_p} = D_{Exp_p}(\Omega,d,\mu) \geq 0$ the best constant in the following inequality ($p > 0$):
\[
\forall r \geq 0 \;\;\; \K(r) \leq \exp( - (D_{Exp_p} r)^p) ~.
\]
When $D_{Exp_1} > 0$ ($D_{Exp_2} > 0$) we will say that our space has exponential (Gaussian) concentration (respectively).  Indeed, it is well-known (see e.g. \cite{Ledoux-Book}) that the standard Gaussian measure $\gamma$ on $(\Real^n,|\cdot|)$ satisfies $D_{Exp_2}(\gamma) = 1/\sqrt{2}$ , and hence by scaling $D_{Exp_2}(\gamma^w) = \sqrt{w/2}$. Furthermore, it was shown by M. Gromov and V. Milman \cite{GromovMilmanLevyFamilies} that $D_{Exp_1} \geq c \sqrt{\lambda_1}$ for some universal numeric constant $c > 0$, i.e. that having positive spectral-gap implies exponential concentration  (their proof applies in a general metric-measure space setting). The converse inequality is again in general false (consider e.g. a measure with disconnected support). However, for the class of log-concave measures 
in Euclidean space, it was shown in our previous work \cite{EMilman-RoleOfConvexity} that in fact $D_{Exp_1} \geq c' D_{Che}$ for some universal constant $c' > 0$, which implies together with the above mentioned results that for this class $D_{Che} \simeq \sqrt{\lambda_1} \simeq D_{Exp_1}$ (see \cite{EMilman-RoleOfConvexity} for a much stronger result in a more general setting). 

\subsection{Stability of $D_{Che}$ in the class of log-concave measures}

Next, we will require two notions of proximity between two Borel probability measures $\mu_1$ and $\mu_2$ on $(\Omega,d)$. The first is given by the total-variation distance, denoted $d_{TV}(\mu_1,\mu_2)$ and defined as:
\[
d_{TV}(\mu_1,\mu_2) := \sup_{A \subset \Omega}  \abs{\mu_1(A) - \mu_2(A)} ~.
\]
The second is given by the relative entropy (or Kullback--Leibler divergence) of $\mu_2$ with respect to $\mu_1$, denoted $H(\mu_2 | \mu_1)$ and defined as:
\[
H(\mu_2 | \mu_1) := Ent_{\mu_1}\brac{\frac{d\mu_2}{d\mu_1}} = \int \log\brac{\frac{d\mu_2}{d\mu_1}} d\mu_2 ~,
\]
if $\mu_2 \ll \mu_1$, and $\infty$ otherwise.

The key ingredient in our proof of Bobkov's Theorem \ref{thm:Bobkov} is the following theorem, compiled from our previous results from \cite{EMilman-RoleOfConvexity,EMilmanGeometricApproachPartII}:

\begin{thm}[\cite{EMilman-RoleOfConvexity,EMilmanGeometricApproachPartII}] \label{thm:transfer}
Let $\mu_1,\mu_2$ denote two log-concave probability measures on $(\Real^n,|\cdot|)$. 
\begin{itemize}
\item  There exists a universal constant $c > 0$ so that for any $\eps > 0$:
\[
d_{TV}(\mu_1,\mu_2) \leq 1 - \eps \;\;\; \Rightarrow \;\;\; D_{Che}(\mu_2) \geq c \frac{\eps^2}{\log(1/\eps)} D_{Che}(\mu_1) ~.
\]
\item There exists a universal constant $c > 0$ so that for any $p \geq 1$:
\[
H(\mu_2 | \mu_1) \leq L \;\;\; \Rightarrow \;\;\; D_{Che}(\mu_2) \geq \frac{c}{1+L^{1/p}} D_{Exp_p}(\mu_1) ~. 
\]
Since $D_{Exp_1}(\mu_1) \geq c' D_{Che}(\mu_1)$, in particular:
\[
H(\mu_2 | \mu_1) \leq L \;\;\; \Rightarrow \;\;\; D_{Che}(\mu_2) \geq \frac{c c'}{1+L} D_{Che}(\mu_1) ~. 
\]
\end{itemize}
\end{thm}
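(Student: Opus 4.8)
The plan is to run, for both assertions, the same three-step scheme: (i) turn the hypothesis on $\mu_1$ into quantitative concentration of $\mu_1$ at the scales that matter; (ii) \emph{transfer} concentration at a \emph{single} scale from $\mu_1$ to $\mu_2$, paying a price controlled by the proximity parameter ($\eps$, resp. $L$); (iii) feed the resulting one-scale concentration of $\mu_2$ into the self-improvement phenomenon valid for log-concave measures. For step (iii) I will use the following consequence of the equivalences recalled above (the content of \cite{EMilman-RoleOfConvexity}): if $\nu$ is a log-concave probability measure on $(\Real^n,\abs{\cdot})$ and $\K_\nu(R)\le 1/4$ for some $R>0$, then $D_{Che}(\nu)\ge c/R$ --- since for log-concave $\nu$ a single such ``measurement'' already forces full exponential concentration with rate $\gtrsim 1/R$, hence $D_{Exp_1}(\nu)\gtrsim 1/R$, hence $D_{Che}(\nu)\gtrsim 1/R$. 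Two elementary facts will be used repeatedly: the dictionary between $\K$ and the deviation of $1$-Lipschitz functions about their medians, and the observation that if the enlargement $A_r$ fails to reach $\mu$-measure $1/2$ then $\mu(A)\le\K_\mu(r)$ (apply the definition of $\K_\mu$ to $\Real^n\setminus A_r$, which has $\mu$-measure $\ge 1/2$ and lies at distance $\ge r$ from $A$).

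For the relative-entropy bound I would fix $p\ge 1$, set $D_1:=D_{Exp_p}(\mu_1)$ (assuming $D_1>0$, else there is nothing to prove), and take any Borel $A$ with $\mu_2(A)\ge 1/2$. The workhorse is the data-processing inequality for relative entropy applied to the partition $\set{B,\Real^n\setminus B}$:
\[
\mu_2(B)\log\frac{\mu_2(B)}{\mu_1(B)}+\brac{1-\mu_2(B)}\log\frac{1-\mu_2(B)}{1-\mu_1(B)}\le H(\mu_2\mid\mu_1)\le L .
\]
Taking $B=\Real^n\setminus A$ and bounding the term that can be negative below by $-1/e$ yields $\mu_1(A)\ge\tfrac12\exp(-2(L+1/e))\gtrsim e^{-2L}$; combined with the $\exp_p$-concentration of $\mu_1$ and the elementary fact above, $A_r$ reaches $\mu_1$-measure $1/2$ by a radius $r_1\lesssim(1+L^{1/p})/D_1$ (using $(x+y)^{1/p}\le x^{1/p}+y^{1/p}$ for $p\ge1$). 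Pushing $r$ a further $\gtrsim(1+L^{1/p})/D_1$ beyond $r_1$ makes $\mu_1(\Real^n\setminus A_r)\le e^{-8(L+1)}$, whereupon the same entropy inequality with $B=\Real^n\setminus A_r$ forces $\mu_2(\Real^n\setminus A_r)\le 1/4$. This gives $\K_{\mu_2}(R)\le 1/4$ with $R\lesssim(1+L^{1/p})/D_1$, and step (iii) delivers $D_{Che}(\mu_2)\ge c/R\ge c'(1+L^{1/p})^{-1}D_{Exp_p}(\mu_1)$. The ``in particular'' is the case $p=1$ together with $D_{Exp_1}(\mu_1)\ge c''D_{Che}(\mu_1)$ (Gromov--Milman composed with Cheeger, as recalled above).

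For the total-variation bound the replacement for the entropy inequality is the common-mass decomposition: $d_{TV}(\mu_1,\mu_2)\le 1-\eps$ means there are a Borel probability measure $\nu$ and $\eps_0\ge\eps$ with $\eps_0\nu\le\mu_1$ and $\eps_0\nu\le\mu_2$, i.e. $\mu_i=\eps_0\nu+(1-\eps_0)\rho_i$ for probability measures $\rho_i$. From $\nu\le\mu_1/\eps$ and the exponential concentration of $\mu_1$ (rate $\gtrsim D_{Che}(\mu_1)$, by Gromov--Milman) the common part $\nu$ inherits a delayed exponential concentration: if $\nu(A)\ge 1/2$ then $\mu_1(A)\ge\eps/2$, so $A_r$ reaches $\mu_1$-measure $1/2$ by a radius $r_1\lesssim\log(1/\eps)/D_{Che}(\mu_1)$ and then $\nu(\Real^n\setminus A_{r_1+s})\le\K_{\mu_1}(s)/\eps$ drops below $1/4$ for $s\lesssim\log(1/\eps)/D_{Che}(\mu_1)$, giving $\K_\nu(R_0)\le 1/4$ with $R_0\lesssim\log(1/\eps)/D_{Che}(\mu_1)$. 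The remaining point is to pass from concentration of $\nu$ to concentration of $\mu_2$ itself; here the log-concavity of $\mu_2$ is indispensable, since a generic mixture $\eps_0\nu+(1-\eps_0)\rho_2$ need not be well concentrated even when $\nu$ is. This is precisely the total-variation transference theorem of \cite{EMilmanGeometricApproachPartII}: its mechanism is that $\mu_2$ is log-concave and assigns mass $\ge\eps$ to the ``small'' region carrying $\nu$, so a Borell-type inequality --- applied, roughly, twice (once to inflate a ball of $\mu_2$-measure $\ge\eps$ up to $\mu_2$-measure $\gtrsim 1/2$, once more to tame the resulting tail, which is where the quadratic loss in $\eps$ enters) --- yields $\K_{\mu_2}(R)\le 1/4$ with $R\lesssim\log(1/\eps)/(\eps^2 D_{Che}(\mu_1))$, and step (iii) then gives $D_{Che}(\mu_2)\ge c/R\gtrsim\eps^2 D_{Che}(\mu_1)/\log(1/\eps)$.

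The hard part is step (ii), and especially its total-variation instance: when $d_{TV}(\mu_1,\mu_2)$ is close to $1$ the two measures may be nearly mutually singular, so concentration cannot be transferred set by set --- a set of $\mu_2$-measure $1/2$ may be $\mu_1$-null, and nothing about $\mu_1$ alone bounds its $\mu_2$-enlargements. One must genuinely exploit the convexity of the densities to prevent the ``leftover'' mass of $\mu_2$ from sitting far away; this is the difficulty already resolved in \cite{EMilman-RoleOfConvexity,EMilmanGeometricApproachPartII}, and I would invoke those results there rather than reprove them. The entropy case is, by comparison, essentially self-contained once step (iii) is granted, the only other ingredient being the elementary entropy/partition inequality.
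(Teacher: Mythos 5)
The paper offers no proof of this theorem at all: it is stated as ``compiled from'' \cite{EMilman-RoleOfConvexity,EMilmanGeometricApproachPartII} and simply imported, so there is no in-paper argument to compare against line by line. Measured against that, your proposal is more, not less, than what the paper supplies, and I find no error in what you actually argue. The relative-entropy half is a genuine and essentially complete derivation: the two-set data-processing inequality, the lower bound $\mu_1(A)\gtrsim e^{-2L}$, the two-stage enlargement giving $\K_{\mu_2}(R)\le 1/4$ with $R\lesssim(1+L^{1/p})/D_{Exp_p}(\mu_1)$, and the final appeal to the log-concave self-improvement all check out (I verified the second application of the partition inequality: $\mu_1(B)\le e^{-8(L+1)}$ does force $\mu_2(B)\le 1/4$). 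One caveat: your step (iii) --- a single scale $R$ with $\K_\nu(R)\le 1/4$ implies $D_{Che}(\nu)\ge c/R$ for log-concave $\nu$ --- is the full-strength main theorem of \cite{EMilman-RoleOfConvexity}, which is strictly stronger than the bare equivalence $D_{Che}\simeq\sqrt{\lambda_1}\simeq D_{Exp_1}$ recalled in Section \ref{subsec:interactions} (the latter presupposes exponential decay at all scales); you should cite it as such rather than present it as a formal consequence of what is ``recalled above.'' For the total-variation half you correctly set up the common-mass decomposition and transfer concentration to the common part $\nu$, but the genuinely hard step --- passing from $\nu$ to the log-concave mixture $\mu_2$, which is where the $\eps^2/\log(1/\eps)$ loss arises --- is deferred to the references; your description of its mechanism is plausible but speculative. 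Since the paper itself cites rather than proves this very statement, deferring there is legitimate, but be aware that this half of your write-up is a reduction to the cited theorem rather than a proof of it.
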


\subsection{Conditioning the Gaussian measure onto $K$}

Recall that given $w > 0$, we define $\gamma_K^w$ by conditioning the Gaussian measure $\gamma^w$ onto $K$, namely:
\begin{equation} \label{eq:gamma-def}
\gamma_K^w := \exp(Z_K(w)) \exp\brac{-\frac{w}{2} |x|^2} d\mu_K(x) ~,
\end{equation}
where $\mu_K$ denotes the uniform (probability) measure on $K$, and $Z_K(w) \geq 0$ is a normalization term ensuring that $\gamma_K^w$ is a probability measure. The following estimates are well-known:
\begin{thm} \label{thm:estimates}
\[
\forall w > 0 \;\;\;\; D_{Che}(\gamma_K^w) \geq \sqrt{\frac{2}{\pi}} \sqrt{w} ~,~ \lambda_1(\gamma_K^w) \geq w ~,~ D_{Exp_2}(\gamma_K^w) \geq \sqrt{w/2} ~.
\]
\end{thm}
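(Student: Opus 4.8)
The plan is to deduce each of the three estimates for the conditioned measure $\gamma_K^w$ from the corresponding sharp equality for the un‑conditioned Gaussian $\gamma^w$, all of which were already recorded in Subsection~\ref{subsec:interactions}: $\lambda_1(\gamma^w)=w$, $D_{Che}(\gamma^w)=\sqrt{2/\pi}\sqrt w$ and $D_{Exp_2}(\gamma^w)=\sqrt{w/2}$. It thus suffices to exhibit a mechanism transferring these three lower bounds from $\gamma^w$ to $\gamma_K^w$.

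The structural fact making this possible is that $\gamma_K^w$ is a log‑concave perturbation of $\gamma^w$ by a convex function: writing $\gamma^w=\exp(-W)\,dx$ with $\nabla^2 W\equiv w\,\mathrm{Id}$ and $\gamma_K^w=\exp(-V)\,dx$, we have $V=W+\chi_K+\mathrm{const}$, where $\chi_K$ is the (extended‑real‑valued) convex indicator of $K$, equal to $0$ on $K$ and $+\infty$ off $K$. Equivalently, $\gamma_K^w$ --- regarded as the convex body $K$ equipped with the density $\exp(-\tfrac w2|x|^2)$ and Neumann boundary conditions --- has Bakry--\'Emery curvature bounded below by $w$, the boundary $\partial K$ being convex so that all boundary terms in the relevant Bochner‑type integrations by parts carry the favourable sign. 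This can be exploited directly through the curvature‑dimension calculus of \cite{BakryEmery,BakryLedoux} (the Lichnerowicz estimate for $\lambda_1$, the Bakry--Ledoux comparison theorem for the isoperimetric profile $\I$, and the Bakry--\'Emery log‑Sobolev inequality together with Herbst's argument for $D_{Exp_2}$), but I would take as the primary route a single transport‑map argument that yields all three bounds at once.

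Namely, by Caffarelli's contraction theorem \cite{CaffarelliContraction}, the Brenier optimal transport map $T=\nabla\varphi$ pushing $\gamma^w$ forward onto $\gamma_K^w$ is $1$‑Lipschitz on $\Real^n$ (the convex potential $\varphi$ satisfies $\nabla^2\varphi\preceq\mathrm{Id}$, since the source potential $W$ has Hessian exactly $w\,\mathrm{Id}$ and the target potential $V$ has Hessian at least $w\,\mathrm{Id}$ in the barrier sense). Given such a $1$‑Lipschitz $T$ with $T_\#\gamma^w=\gamma_K^w$, all three estimates follow by soft arguments. For any Borel $A\subset\Real^n$ and $\eps>0$, the $1$‑Lipschitz property gives $T\big((T^{-1}(A))_\eps\big)\subseteq A_\eps$ (here $B_\eps$ denotes the Euclidean $\eps$‑neighborhood of $B$), hence $\gamma_K^w(A_\eps)\geq\gamma^w\big((T^{-1}(A))_\eps\big)$ while $\gamma^w(T^{-1}(A))=\gamma_K^w(A)$; letting $\eps\to0$ yields $\I_{\gamma_K^w}\geq\I_{\gamma^w}$, whence $D_{Che}(\gamma_K^w)\geq D_{Che}(\gamma^w)$, and restricting to sets of measure at least $1/2$ yields $\K_{\gamma_K^w}\leq\K_{\gamma^w}$, whence $D_{Exp_2}(\gamma_K^w)\geq D_{Exp_2}(\gamma^w)$. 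For the spectral gap, given $f\in\F(\Real^n,|\cdot|)$ the composition $f\circ T$ again lies in $\F$ with $|\nabla(f\circ T)|\leq|\nabla f|\circ T$, so $\int|\nabla f|^2\,d\gamma_K^w=\int(|\nabla f|\circ T)^2\,d\gamma^w\geq\int|\nabla(f\circ T)|^2\,d\gamma^w$; since moreover $f\circ T$ has the same distribution under $\gamma^w$ as $f$ has under $\gamma_K^w$ (in particular the same variance), the Poincar\'e inequality for $\gamma^w$ applied to $f\circ T$ gives $\int|\nabla f|^2\,d\gamma_K^w\geq\lambda_1(\gamma^w)\,\mathrm{Var}_{\gamma_K^w}(f)$, i.e.\ $\lambda_1(\gamma_K^w)\geq\lambda_1(\gamma^w)=w$. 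Together with the previous two bounds this is exactly the assertion.

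The one genuinely delicate point I anticipate is the \textbf{hard wall}: the potential $V$ is only lower semicontinuous (it equals $+\infty$ off $K$), so neither Caffarelli's theorem nor the Bakry--\'Emery inequalities on a manifold‑with‑boundary is literally applicable as stated. This is handled by a routine approximation --- exhaust the interior of $K$ by an increasing sequence of smooth, strongly convex bodies (equivalently, replace $\chi_K$ by smooth convex functions of suitable growth), establish the three bounds for each smooth approximant where the quoted results apply verbatim, and pass to the limit, using that the approximating measures converge weakly to $\gamma_K^w$ and that these particular lower bounds on $\lambda_1$, $D_{Che}$ and $D_{Exp_2}$ persist in such a limit of log‑concave measures. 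Everything else in the argument is a direct invocation of the facts recalled in Section~\ref{sec:2} or of the cited literature.
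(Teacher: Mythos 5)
Your proposal is correct and follows essentially the same route as the paper: Caffarelli's contraction theorem produces a $1$-Lipschitz map pushing $\gamma^w$ forward onto $\gamma_K^w$, the three quantities are monotone under such contractions, and the hard wall is handled by approximating by smooth log-concave densities (your secondary Bakry--\'Emery/Bakry--Ledoux/Brascamp--Lieb route is also the paper's stated alternative).
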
 
\noindent For the sake of completeness, we present (several variants of) the proof. 
\begin{proof}
One way to obtain all these estimates simultaneously and with the optimal constants is by applying Caffarelli's Contraction Theorem. It was shown in \cite{CaffarelliContraction} that given a source probability measure $\mu_1$ and target probability measure $\mu_2$ on $\Real^n$, having the form $\mu_1 = \gamma^w$ and $\mu_2 = f \gamma^w$ with $f$ log-concave, there exists a Borel map $T : (\Real^n,|\cdot|) \rightarrow (\Real^n,|\cdot|)$ pushing forward $\mu_1$ onto $\mu_2$ which contracts Euclidean distance: $|T(x) - T(y)| \leq |x-y|$ for all $x,y \in \Real^n$. In fact, $T$ is the Brenier optimal-transport map \cite{BrenierMap}, minimizing the $L^2$-average transport cost $\int |T(x) - x|^2 d\mu_1(x)$ among all maps pushing forward $\mu_1$ onto $\mu_2$ (see also \cite{KimEMilmanGeneralizedCaffarelli} for a different construction in a more general setup). We note that Caffarelli's result applies regardless of the smoothness of $f$, since we can always approximate $f$ by smooth log-concave functions $\set{f_n}$ so that $\set{f_n \mu_1}$ converge in total-variation to $f \mu_1$, and the corresponding contractions $\set{T_n}$ pushing forward $\mu_1$ onto $\set{f_n \mu_1}$ will necessarily converge (by the Arzel\`a--Ascoli Theorem) to our desired contraction $T$ (see \cite[Lemma 3.3]{KimEMilmanGeneralizedCaffarelli} for the precise argument). Note that the convexity of $K$ ensures that $f_0 := \frac{d \gamma_K^w}{d\gamma^w} = c_{K,w} \mathbf{1}_K$ is log-concave, and so by Caffarelli's Contraction Theorem there exists a contraction pushing forward $\gamma^w$ onto $\gamma^w_K$. 

It is easy to verify that all of forms of interaction between metric and measure described in Subsection \ref{subsec:interactions} can only improve under contracting maps. Namely, if $T : (\Omega_1,d_1,\mu_1) \rightarrow (\Omega_2,d_2,\mu_2)$ is a Borel map pushing forward $\mu_1$ onto $\mu_2$ and contracting distances $d_2(T(x),T(y)) \leq d_1(x,y)$ for all $x,y \in \Omega_1$, then $\I_{(\Omega_2,d_2,\mu_2)} \geq \I_{( \Omega_1,d_1,\mu_2)}$, $\abs{ \nabla (f \circ T)}(x) \leq \abs{\nabla f} (T x)$ and $\K_{(\Omega_2,d_2,\mu_2)} \leq \K_{( \Omega_1,d_1,\mu_2)}$. In particular, it immediately follows that:
\[
D_{Che}(\gamma_K^w) \geq D_{Che}(\gamma^w) ~,~ \lambda_1(\gamma_K^w) \geq \lambda_1(\gamma^w) ~,~ D_{Exp_2}(\gamma_K^w) \geq D_{Exp_2}(\gamma^w) ~,
\]
and the asserted bounds follow from the well-known bounds for the Gaussian measure $\gamma^w$ mentioned in Subsection \ref{subsec:interactions}. 

Another possible way to derive the above mentioned bounds (with perhaps inferior numerical constants) is as follows. The Brascamp--Lieb inequality \cite{BrascampLiebPLandLambda1} states that if $\mu = \exp(-V(x)) dx$ is a log-concave measure on $\Real^n$ with $V \in C^2(\Real^n)$ having positive definite Hessian $Hess \; V > 0$, then for any $f \in C^1(\Real^n)$:
\[
\int \scalar{ (Hess \; V)^{-1} \nabla f , \nabla f} d\mu \geq \int f^2 d\mu - \brac{\int f d\mu}^2 ~.
\] 
In particular, it follows that if $Hess \; V \geq \lambda Id$ with $\lambda > 0$ then $\lambda_1(\mu) \geq \lambda$ (the latter statement may also be attributed to Lichnerowicz, at least in an analogous Riemannian setting, cf. \cite[Theorem 4.70]{GHLBookEdition1}). Moreover, if $Hess \; V \geq \lambda Id$ with $\lambda > 0$, then by the Bakry--\'Emery criterion \cite{BakryEmery}, $(\Real^n,|\cdot|,\mu)$ satisfies in fact a stronger log-Sobolev inequality, and the Herbst argument implies that $D_{Exp_2}(\mu) \geq c \sqrt{\lambda}$ for some universal constant $c>0$ (the interested reader may consult \cite{Ledoux-Book,EMilmanGeometricApproachPartII} for missing definitions and arguments). The inequality $D_{Che}(\mu) \geq c' \sqrt{\lambda}$ may be deduced by using that $D_{Che}(\mu) \simeq \sqrt{\lambda_1(\mu)}$ for log-concave measures $\mu$; alternatively, one may directly deduce that $D_{Che}(\mu) \geq D_{Che}(\gamma^\lambda)$ by applying the isoperimetric inequality of Bakry--Ledoux \cite{BakryLedoux}. Finally, an approximation argument as the one described above ensures that all isoperimetric, Sobolev and concentration estimates pass onto $\gamma^w_K = f_0 \gamma_w$ (see \cite{EMilman-RoleOfConvexity,EMilmanGeometricApproachPartI,EMilmanRoleOfConvexityInFunctionalInqs} for more technical approximation results, which in fact apply in the more challenging Riemannian setting). 
\end{proof}

\section{Proof of Theorem \ref{thm:Bobkov}} \label{sec:3}

In this section we provide a proof of Bobkov's Theorem \ref{thm:Bobkov} via Gaussian Fitting and estimation of the free energy $Z_K(w) / w$.

We denote $E_2 = \sqrt{\E(|X|^2)}$. Clearly $E \leq E_2$ by Jensen's inequality, but as mentioned in the Introduction, it is also well-known that the convexity of $K$ ensures that $E_2 \leq C E$ for some universal numeric constant $C > 1$. Together with the obvious reduction to the case $x_0 = 0$ described in the Introduction, the proof of Theorem \ref{thm:Bobkov} thus reduces to proving the following equivalent bound:

\begin{equation} \label{eq:Bobkov2}
 D_{Che}(K) \geq \frac{c}{\sqrt{E_2 S}} ~.
\end{equation}

\subsection{Reduction to Estimation of the Free Energy}

Note that the normalization term $Z_K(w) \geq 0$ appearing in (\ref{eq:gamma-def}) is given by:
\[
Z_K(w) := - \log \int \exp(-\frac{w}{2} |x|^2) d\mu_K(x) ~.
\]
We summarize several useful properties of $Z_K(w)$ below:
\begin{lem}
\hfill
\begin{itemize}
\item
The function $[0,\infty) \ni w \mapsto Z_K(w)$ is concave increasing. 
\item
$Z_K(0) = 0$ and $\frac{d}{dw}Z_K(0) =  \frac{1}{2} E_2^2$.
\item $Z_K(w) = \frac{n}{2} \log(w / (2\pi )) - \log \gamma(\sqrt{w} K)$, 
where $\gamma$ denotes the standard Gaussian measure on $\Real^n$. 
\item
$(0,\infty) \ni w \mapsto Z_K(w) / w$ decreases continuously from $\frac{1}{2} E_2^2$ to $0$.
\end{itemize}
\end{lem}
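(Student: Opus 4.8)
The plan is to verify the four bullet points essentially by direct computation, treating $Z_K(w) = -\log \int_K \exp(-\tfrac{w}{2}|x|^2)\,d\mu_K(x)$ as the log-Laplace transform (in the variable $w$) of the random variable $|X|^2/2$, where $X \sim \mu_K$. Write $\psi(w) := \int \exp(-\tfrac{w}{2}|x|^2)\,d\mu_K(x) = \E[\exp(-w |X|^2/2)]$, so $Z_K(w) = -\log \psi(w)$. Note $\psi(0) = 1$, hence $Z_K(0) = 0$, and $\psi$ is positive, smooth and strictly decreasing on $[0,\infty)$ (differentiation under the integral sign is justified by dominated convergence since $|X|^2$ is bounded on the bounded set $K$), so $Z_K$ is increasing. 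For concavity, I would compute $Z_K''(w) = -\frac{\psi''(w)\psi(w) - \psi'(w)^2}{\psi(w)^2}$; since $\psi''(w)\psi(w) - \psi'(w)^2 \geq 0$ is exactly the Cauchy--Schwarz (or Jensen) inequality applied to the positive measure $\exp(-w|X|^2/2)\,d\mu_K$ — equivalently, $w \mapsto \log\psi(w)$ is convex because $\psi$ is a Laplace transform — we get $Z_K'' \leq 0$, i.e. $Z_K$ is concave.

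For the second bullet, $\psi'(w) = -\tfrac12 \E[|X|^2 \exp(-w|X|^2/2)]$, so $\psi'(0) = -\tfrac12 \E|X|^2 = -\tfrac12 E_2^2$, and thus $\frac{d}{dw}Z_K(0) = -\psi'(0)/\psi(0) = \tfrac12 E_2^2$. For the third bullet, I would change variables: $\gamma(\sqrt{w}K) = (2\pi)^{-n/2}\int_{\sqrt w K} e^{-|y|^2/2}\,dy = (2\pi)^{-n/2} w^{n/2} \int_K e^{-w|x|^2/2}\,dx = (2\pi)^{-n/2} w^{n/2} \vol(K) \cdot \psi(w)$ (using $dy = w^{n/2}dx$ under $y = \sqrt w x$ and that $d\mu_K = dx/\vol(K)$). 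Taking $-\log$ and rearranging gives $Z_K(w) = \tfrac n2 \log(w/(2\pi)) + \log\vol(K) - \log\gamma(\sqrt w K)$; the stated formula presumably absorbs $\log\vol(K)$ by a normalization convention (or one notes it is a constant; I would add the $\log\vol(K)$ term or remark that it is assumed $\vol(K)=1$ as in the Introduction's Gibbs-measure discussion).

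For the fourth bullet — the continuity and monotonicity of $w \mapsto Z_K(w)/w$ decreasing from $\tfrac12 E_2^2$ to $0$ — I would use concavity of $Z_K$ together with $Z_K(0)=0$: for a concave function vanishing at the origin, the slope from the origin $Z_K(w)/w$ is non-increasing in $w$ (standard, and can be seen from $\frac{d}{dw}(Z_K(w)/w) = (w Z_K'(w) - Z_K(w))/w^2$, where $w Z_K'(w) - Z_K(w) \leq Z_K(w) + wZ_K'(w) - $ wait — more cleanly, $g(w):=Z_K(w)/w$ has $g'(w) = \frac{Z_K'(w) w - Z_K(w)}{w^2}$, and concavity with $Z_K(0)=0$ gives $Z_K(w) = Z_K(w) - Z_K(0) \geq w Z_K'(w)$ by the subgradient inequality at $w$, so $g' \leq 0$). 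The limit as $w \to 0^+$ is $Z_K'(0) = \tfrac12 E_2^2$ by the second bullet (and L'Hôpital / definition of derivative). For the limit as $w \to \infty$: $Z_K(w) = -\log\psi(w)$ and $\psi(w) = \E[e^{-w|X|^2/2}] \geq \P(|X| \leq \delta) e^{-w\delta^2/2}$ for any $\delta>0$ with $\P(|X|\le\delta)>0$ (such $\delta$ exists unless $K$ has $0$ in a trivial position — and even if $0 \notin K$ one argues with the essential infimum $r_0 := \operatorname{ess\,inf}|X| > 0$, giving $\psi(w) \geq \P(|X| \leq r_0 + \delta)e^{-w(r_0+\delta)^2/2}$). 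Hence $Z_K(w) \leq \tfrac w2 (r_0+\delta)^2 + \log(1/\P(\cdots))$, so $\limsup_{w\to\infty} Z_K(w)/w \leq \tfrac12(r_0+\delta)^2$; on the other hand $\psi(w) \leq e^{-w r_0^2/2}$ gives $Z_K(w)/w \geq r_0^2/2$. Letting $\delta \to 0$ pins $\lim Z_K(w)/w = r_0^2/2$. The main subtlety here is that the stated limit is $0$, which forces $r_0 = \operatorname{ess\,inf}_{x\in K}|x| = 0$, i.e. it is implicitly assumed (consistent with the reduction $x_0 = 0$ only mattering up to translation, but the free-energy computation genuinely uses $0 \in \overline{K}$, or rather that $0$ is a limit point of $K$ in the sense that balls around $0$ have positive volume in $K$); I expect reconciling this normalization — whether $0 \in K$, or $0 \in \partial K$, or the statement tacitly restricts to that case — to be the one genuine point requiring care, whereas everything else is routine Laplace-transform calculus.
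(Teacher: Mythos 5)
Your proof is correct and follows essentially the same route as the paper, which simply asserts these facts via direct differentiation and H\"older's inequality (i.e., convexity of the log-Laplace transform $w \mapsto \log \E[e^{-w|X|^2/2}]$), with the fourth bullet deduced from the first three exactly as you do. The two caveats you flag are genuine but harmless: the third bullet as literally stated requires $\vol(K)=1$ (otherwise a $+\log \vol(K)$ term appears), and the limit $0$ as $w \to \infty$ in the fourth bullet requires $0 \in \overline{K}$ (in general the limit is $\tfrac{1}{2}\inf_{x \in K}|x|^2$); neither point is used in the main argument, which only invokes the monotonicity of $Z_K(w)/w$ and its value near $w=0$.
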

\begin{proof}
The first assertion follows immediately by direct differentiation of by employing H\"{o}lder's inequality. The second and third ones are immediate to verify as well. The fourth one is an immediate consequence of the first three. 
\end{proof}

\begin{rem}
Some properties and extremal characterizations of $\gamma(K)$ for symmetric convex sets $K$ in $\Real^n$ have been studied by Cordero-Erausquin, Fradelizi and Maurey \cite{CFM-BConjecture} and by Bobkov \cite{BobkovFloridaProceedings}.  In view of the third assertion of the above lemma, it is immediate to translate these properties into statements about $Z_K(w)$. However, we have not found any concrete applications of these connections, and so we only vaguely mention these in passing, and refer the interested reader to the above mentioned references. 
\end{rem}

To transfer the lower bound on $D_{Che}(\gamma_K^w)$ given in Theorem \ref{thm:estimates}, namely $D_{Che}(\gamma_K^w) \geq c \sqrt{w}$, onto a lower bound on $D_{Che}(\mu_K)$, we would like to invoke one of the transference principles given by Theorem \ref{thm:transfer}. To this end, we must control the total-variation distance $d_{TV}(\mu_K,\gamma^w_K)$ or relative entropy $H(\mu_K | \gamma^w_K)$. The calculation is a little more concise for the latter option, so we proceed with the task of bounding the relative entropy from above.

\begin{lem}
\hfill
\begin{itemize}
\item
\begin{eqnarray} 
\nonumber H(\mu_K | \gamma_K^w) &=& \int \brac{\frac{w}{2} |x|^2 - Z_K(w)} d\mu_K(x) \\
\label{eq:H-formula} &=& \frac{1}{2} E_2^2 w - Z_K(w) = w \brac{\frac{1}{2} E_2^2 - \frac{Z_K(w)}{w}}  ~.
\end{eqnarray}
\item The function $[0,\infty) \ni w \mapsto H(\mu_K | \gamma_K^w)$ is convex and increases from $0$ to $\infty$.  
\end{itemize}
\end{lem}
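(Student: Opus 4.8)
The plan is to derive the closed form for $H(\mu_K | \gamma_K^w)$ by a direct computation of the relative density, and then to read off every qualitative property from the structural facts about $Z_K$ already recorded in the preceding lemma.

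\emph{Step 1: the identity.} From the definition (\ref{eq:gamma-def}) the Radon--Nikodym derivative is
\[
\frac{d\gamma_K^w}{d\mu_K}(x) = \exp(Z_K(w))\,\exp\brac{-\tfrac{w}{2}\abs{x}^2} ~,
\]
which is bounded above and below by positive constants on the bounded set $K$; in particular $\mu_K \ll \gamma_K^w$, the relative entropy is finite, and
\[
\log\frac{d\mu_K}{d\gamma_K^w}(x) = \tfrac{w}{2}\abs{x}^2 - Z_K(w) ~.
\]
Substituting into the definition of $H(\cdot \mid \cdot)$ gives $H(\mu_K | \gamma_K^w) = \int \brac{\tfrac{w}{2}\abs{x}^2 - Z_K(w)}\,d\mu_K(x)$; since $\int \abs{x}^2 d\mu_K = \E(\abs{X}^2) = E_2^2$ and $Z_K(w)$ is a constant, this equals $\tfrac12 E_2^2 w - Z_K(w) = w\brac{\tfrac12 E_2^2 - Z_K(w)/w}$, which is (\ref{eq:H-formula}).

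\emph{Step 2: convexity, monotonicity and limits.} Convexity of $w \mapsto H(\mu_K | \gamma_K^w) = \tfrac12 E_2^2 w - Z_K(w)$ is immediate, being the sum of the linear function $\tfrac12 E_2^2 w$ and the convex function $-Z_K(w)$ (concavity of $Z_K$ is the first bullet of the preceding lemma; alternatively $\tfrac{d^2}{dw^2}H = -Z_K''(w) = \tfrac14\,\mathrm{Var}_{\gamma_K^w}(\abs{X}^2) \geq 0$). At $w = 0$ we get $H = 0$ since $Z_K(0) = 0$, consistent with $\gamma_K^0 = \mu_K$. As $w \to \infty$, the fourth bullet gives $Z_K(w)/w \downarrow 0$, hence $H(\mu_K | \gamma_K^w) = w\brac{\tfrac12 E_2^2 - Z_K(w)/w} \to +\infty$ because $\tfrac12 E_2^2 - Z_K(w)/w \to \tfrac12 E_2^2 > 0$. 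For monotonicity, differentiate: $\tfrac{d}{dw}H(\mu_K | \gamma_K^w) = \tfrac12 E_2^2 - Z_K'(w)$, and since $Z_K$ is concave with $Z_K'(0) = \tfrac12 E_2^2$ (second bullet) we have $Z_K'(w) \leq \tfrac12 E_2^2$, so $H$ is non-decreasing; together with $H(0) = 0$, $H(\infty) = +\infty$ and convexity it is in fact increasing on $[0,\infty)$. One may further note that $Z_K'(w) = \tfrac12 \int \abs{x}^2 d\gamma_K^w < \tfrac12 E_2^2$ for every $w > 0$, by the correlation inequality between the anti-monotone functions $\abs{x}^2$ and $\exp(-w\abs{x}^2/2)$, giving strict monotonicity on $(0,\infty)$, though this refinement is not needed.

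\emph{Main point of care.} There is essentially no obstacle here: the only steps needing a word of justification are the interchange of differentiation and integration in computing $Z_K'$ (routine, since $K$ is bounded, so the integrands and their $w$-derivatives are bounded uniformly on compact $w$-intervals) and the observation $\mu_K \ll \gamma_K^w$ ensuring finiteness of the relative entropy. All the genuine analytic input has already been isolated in the preceding lemma, so this lemma amounts to a short bookkeeping step.
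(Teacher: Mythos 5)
Your proof is correct and follows the same route as the paper's: the identity is a direct computation from the definition of $\gamma_K^w$ and of relative entropy, and the qualitative properties are read off from the concavity of $Z_K$, $Z_K(0)=0$, $\frac{d}{dw}Z_K(0)=\frac12 E_2^2$ and $Z_K(w)/w\downarrow 0$ established in the preceding lemma. You simply spell out the details (including the variance formula for $-Z_K''$ and the strict-monotonicity remark) that the paper leaves implicit.
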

\begin{proof}
The first assertion follows by definition. The second assertion is a direct consequence of the first one and the concavity of $Z_K(w)$. 
\end{proof}

Since the measure $\gamma_K^w$ has Gaussian concentration by Theorem \ref{thm:estimates}, namely $D_{Exp_2}(\gamma_K^2) \geq \sqrt{w /2}$, the transference principle for the relative entropy given by Theorem \ref{thm:transfer} guarantees that if:
\[
H(\mu_K | \gamma_K^{w})  \leq w L_w ~,
\]
then for some universal constant $c' >0$:
\begin{equation} \label{eq:optimize}
D_{Che}(\mu_K) \geq c' \frac{D_{Exp_2}(\gamma_K^w)}{1 + \sqrt{w L_w}} \geq \frac{c' / \sqrt{2}}{1/\sqrt{w} + \sqrt{L_w}} ~.
\end{equation}
Optimizing on $w > 0$ in (\ref{eq:optimize}), we see that we would like to choose $w_0>0$ as large as possible so that $H(\mu_K | \gamma_K^{w_0}) \leq w_0 L_{w_0} \simeq 1$, 
resulting in the bound $D_{Che}(\mu_K) \geq c'' \sqrt{w_0}$. Recalling (\ref{eq:H-formula}), this boils down to choosing $w_0 > 0$ as large as possible so that:
\begin{equation} \label{eq:goal}
\frac{Z_K(w_0)}{w_0} \geq \frac{1}{2} E_2^2 -  \frac{C}{w_0} ~,
\end{equation}
for some numeric constant $C>0$.  We will show that (\ref{eq:goal}) is indeed satisfied with $w_0 = c / E_2 S$, for some appropriately small numeric constant $c > 0$, from whence Bobkov's bound (\ref{eq:Bobkov2}) follows.

\begin{thm}[Free Energy Lower Bound] \label{thm:main-calc}
There exists universal constants $c,C>0$ so that for any convex bounded domain $K$ in $(\Real^n,|\cdot|)$, we have the following estimate for the free energy:
\begin{equation} \label{eq:main-calc}
w \leq \frac{c}{E_2 S} \;\;\; \Rightarrow \;\;\; \frac{Z_K(w)}{w} \geq \frac{1}{2} E_2^2 - C E_2 S ~.
\end{equation}
\end{thm}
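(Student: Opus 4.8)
The plan is to use the reduction already set up above. By (\ref{eq:H-formula}), the estimate (\ref{eq:main-calc}) is equivalent to the bound $H(\mu_K \mid \gamma_K^w) \le C\, w\, E_2 S$ for every $w \le c/(E_2 S)$. Since $w \mapsto H(\mu_K \mid \gamma_K^w)$ is convex and vanishes at $w = 0$, the quotient $w \mapsto H(\mu_K \mid \gamma_K^w)/w$ is non-decreasing, so it suffices to verify the bound at the single value $w_0 := c/(E_2 S)$, i.e.\ to show $H(\mu_K \mid \gamma_K^{w_0}) \le C'$ for an absolute constant $C'$. By (\ref{eq:H-formula}) this is precisely the Laplace-transform estimate $\E_{\mu_K}\brac{\exp(-\tfrac{w_0}{2}\abs{X}^2)} \le e^{C'}\exp(-\tfrac{w_0}{2}E_2^2)$. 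We may moreover assume $S \le \eta E_2$ for a small absolute constant $\eta$, since otherwise $\tfrac12 E_2^2 - C E_2 S \le 0 \le Z_K(w)/w$ and there is nothing to prove; thus we always operate in the regime $S \ll E_2$, so that the annulus $\abs{\abs{x}-E} \le C_1 S$ is genuinely thin.

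The key geometric input is a Brunn--Minkowski observation about the radial profile of $K$ seen from the origin. Put $m_0 := \textrm{dist}(0,K) \ge 0$ and $h(r) := \H^n(K \cap B(r))$. From the inclusion $\tfrac12\big((K\cap B(r_1)) + (K\cap B(r_2))\big) \subseteq K \cap B(\tfrac{r_1+r_2}{2})$ and the Brunn--Minkowski inequality, $r \mapsto h(r)^{1/n}$ is concave on $[m_0,\infty)$ and vanishes at $m_0$. Equivalently, the distribution function of $R := \abs{X}$ under $\mu_K$ has the form $F_R = \Psi^n$ with $\Psi$ concave, increasing and $\Psi(m_0)=0$; in particular $\log F_R$ is concave, and $h(r) \le \min\big(\H^n(K),\, A^n (r-m_0)^n\big)$ with $A := (h^{1/n})'(m_0^+) \in (0,+\infty]$. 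Now use the layer-cake identity $\E_{\mu_K}\brac{\exp(-\tfrac w2 \abs{X}^2)} = w\int_{m_0}^\infty F_R(r)\, r\, \exp(-\tfrac w2 r^2)\,dr$ and split the range of integration at $r = E - C_1 S$. On the \emph{upper} range $r \ge E - C_1 S$ one has $\E_{\mu_K}\brac{\exp(-\tfrac w2\abs{X}^2)\mathbf{1}_{R \ge E - C_1 S}} \le \exp(-\tfrac w2 (E - C_1 S)^2)$, and since $E \le E_2 \le C E$ and $S \le E_2$ one checks (taking $C_1 \ge 1$) that $\tfrac w2\big(E_2^2 - (E-C_1 S)^2\big) \le C_1\, w E_2 S \le C_1 c$, so this range contributes at most $e^{O(c)}\exp(-\tfrac w2 E_2^2)$.

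The \emph{lower} range $m_0 \le r < E - C_1 S$ is the crux, and the main obstacle. There the Gaussian weight $\exp(-\tfrac w2 r^2)$ is as large as $\exp(-\tfrac w2 m_0^2)$, so a naive bound would lose a factor $\exp(\tfrac w2(E_2^2 - m_0^2))$, which is \emph{not} $O(1)$ when $m_0 \ll E_2$. Convexity forbids this: the concavity of $\Psi = F_R^{1/n}$ together with $\Psi(m_0) = 0$ forces $F_R$ to be small throughout the lower range unless $E$ — hence $E_2$, by the Khinchine inequality for convex bodies — is itself close to $m_0$, in which case the interval $[m_0, E - C_1 S]$ is short. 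Carrying this out one distinguishes two regimes. If $0$ is far from $K$, so that $E - m_0 = O(S)$, then the lower interval has length $O(S)$, on it $\abs{E_2^2 - r^2} = O(E_2 S)$, and its contribution is directly $O(1)\cdot\exp(-\tfrac w2 E_2^2)$. If instead $m_0 \ll E$, one inserts the bound $h(r) \le \min(\H^n(K), A^n(r-m_0)^n)$ — when $A = +\infty$, using instead a tangent line to $h^{1/n}$ at an interior radius whose location is controlled by the first-moment identity $E = m_0 + \int_{m_0}^\infty(1-F_R)$, which also yields $A^{-1}\H^n(K)^{1/n} \le 2(E-m_0)$ — and evaluates the resulting one-dimensional integral against $r\exp(-\tfrac w2 r^2)$; it is $O(1)\cdot\exp(-\tfrac w2 E_2^2)$ precisely because $w \le c/(E_2 S)$. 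Adding the two ranges gives $\E_{\mu_K}\brac{\exp(-\tfrac{w_0}{2}\abs{X}^2)} \le e^{C'}\exp(-\tfrac{w_0}{2}E_2^2)$, which is (\ref{eq:goal}) and hence the theorem.

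I expect the lower-range estimate — rather than the routine upper-range bound — to be the delicate heart of the argument, and the place where the scale $w_0 \simeq 1/(E_2 S)$, and no larger, is forced: for a larger $w_0$ the Gaussian weight on small radii would overwhelm the small $\mu_K$-mass carried there. The sharpness of the free-energy lower bound established in Section \ref{sec:4} should reflect exactly this balance.
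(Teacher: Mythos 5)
Your overall architecture is the same as the paper's: reduce to a single $w_0=c/(E_2S)$ via the monotonicity of $Z_K(w)/w$, write $\int \exp(-\frac{w_0}{2}|x|^2)d\mu_K$ as a layer-cake integral against $F_R(r)=\mu_K\set{|x|\le r}$, split at $r\approx E-O(S)$, and bound the upper range by $\exp(-\frac{w_0}{2}(E-O(S))^2)$; all of that is correct. The gap is in the lower range, which you rightly identify as the crux but do not close. The paper's mechanism there is Lemma \ref{lem:lc}: the log-concavity of $r\mapsto \mu_K\set{|x|\le r}$ (Brunn--Minkowski), anchored at the two Chebyshev points $\mu_K\set{|x|\le E-2S}\le 1/4$ and $\mu_K\set{|x|\le E+2S}\ge 3/4$, yields $\mu_K\set{|x|\le r}\le \exp\brac{-c_1(E_2-r)/S}$ for $r\le E_2-3S$, i.e.\ exponential decay at rate $c_1/S$. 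This rate is precisely what is needed to dominate, uniformly over the lower range, the growth of $\exp\brac{\tfrac{w}{2}(E_2^2-r^2)}\le \exp\brac{wE_2(E_2-r)}$, whose rate is $wE_2\le c/S$ --- and even then only for $c<c_1$, with a Gaussian-tail computation that the paper's own remark flags as delicate (a cruder tail bound already costs a spurious factor $\log(E_2/S)$).

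Your substitute never injects $S$ into the decay of $F_R$ on the lower range. The tangent bound $F_R(r)\le \brac{(r-m_0)/(r^*-m_0)}^n$ with $r^*-m_0\le 2(E-m_0)$ (that consequence of the first-moment identity is correct) decays at rate roughly $n/(r-m_0)$, which near the right endpoint of the lower range is $n/(E-m_0)$. For this to beat $wE_2\le c/S$ you need, in effect, an inequality of the form $E-m_0\le C\,n\,S$. That is a genuine Berwald/Borell-type moment inequality for the $1/n$-concave variable $|X|-m_0$, sharp up to constants for the cone profile $F_R(r)=((r-m_0)/(r^*-m_0))^n$, where $E-m_0\simeq r^*-m_0$ while $S\simeq (r^*-m_0)/n$; you neither state nor prove it, and without it the assertion that the lower-range integral is $O(1)\cdot\exp(-\frac{w}{2}E_2^2)$ ``precisely because $w\le c/(E_2S)$'' does not follow --- in your Case 2 the variance $S$ appears only on the Gaussian side, so nothing on the $F_R$ side matches the growth $\exp\brac{c(E_2-r)/S}$. (Your Case 1, $E-m_0=O(S)$, is fine, but you give no quantitative threshold between the cases, and the full weight falls on Case 2.) The repair is either to prove the moment inequality above and then actually carry out the one-dimensional integral, or --- much more economically --- to anchor the concavity at $E\pm 2S$ rather than at $m_0$, as the paper does, which hands you the rate $1/S$ directly and makes the origin's position relative to $K$ irrelevant.
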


Before proceeding with the proof of Theorem \ref{thm:main-calc}, we summarize below its consequences as described in the preceding discussion:

\begin{cor}
There exist two numeric constants $c',c'' > 0$ so that for any convex bounded domain $K$ in $(\Real^n,|\cdot|)$: 
\begin{itemize}
\item Setting $w_0 = c' / (E_2 S)$, we have $H(\mu_K | \gamma_K^{w_0})  \leq 1/2$ and $d_{TV}(\mu_K , \gamma_K^{w_0}) \leq 1/2$.
\item Bobkov's bound: $D_{Che}(K) \geq c'' / \sqrt{E_2 S}$. 
\end{itemize}
\end{cor}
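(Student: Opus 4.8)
The plan is to read off both assertions mechanically from Theorem~\ref{thm:main-calc}, the entropy identity \eqref{eq:H-formula}, and the transference principle Theorem~\ref{thm:transfer}; the only point requiring attention is the bookkeeping of the constant $c'$. Let $c, C > 0$ be the universal constants produced by Theorem~\ref{thm:main-calc}, and set
\[
c' := \min\brac{c, \tfrac{1}{2C}} ~,~~ w_0 := \frac{c'}{E_2 S} ~.
\]
Since $c' \le c$, the hypothesis $w_0 \le c/(E_2 S)$ of Theorem~\ref{thm:main-calc} holds, so $Z_K(w_0)/w_0 \ge \tfrac12 E_2^2 - C E_2 S$. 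Substituting into \eqref{eq:H-formula} gives
\[
H(\mu_K | \gamma_K^{w_0}) = w_0\brac{\tfrac12 E_2^2 - \tfrac{Z_K(w_0)}{w_0}} \le w_0 \cdot C E_2 S = c' C \le \tfrac12 ~,
\]
which is the first bound of the first bullet. For the second, Pinsker's inequality — in the present paper's normalization, $d_{TV}(\nu_1,\nu_2) \le \sqrt{\tfrac12 H(\nu_2 | \nu_1)}$ — yields at once $d_{TV}(\mu_K,\gamma_K^{w_0}) \le \sqrt{\tfrac12 \cdot \tfrac12} = \tfrac12$.

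For Bobkov's bound we apply the relative-entropy transference of Theorem~\ref{thm:transfer} with $p = 2$, $\mu_1 = \gamma_K^{w_0}$, $\mu_2 = \mu_K$ and $L := \tfrac12 \ge H(\mu_K | \gamma_K^{w_0})$, using that by Theorem~\ref{thm:estimates} the measure $\gamma_K^{w_0}$ has Gaussian concentration with $D_{Exp_2}(\gamma_K^{w_0}) \ge \sqrt{w_0/2}$. This gives, for universal constants $c_1, c_2 > 0$,
\[
D_{Che}(\mu_K) \ge \frac{c_1}{1 + L^{1/2}}\, D_{Exp_2}(\gamma_K^{w_0}) \ge \frac{c_1}{1 + 1/\sqrt{2}}\sqrt{\frac{w_0}{2}} = c_2 \sqrt{w_0} = \frac{c_2 \sqrt{c'}}{\sqrt{E_2 S}} ~.
\]
Since $\mu_K$ is supported on the convex body $K$ we have $D_{Che}(K) = D_{Che}(\mu_K)$, so this is exactly $D_{Che}(K) \ge c'' / \sqrt{E_2 S}$ with $c'' := c_2 \sqrt{c'}$, i.e.\ \eqref{eq:Bobkov2}; as recalled before the statement, together with the Khinchine-type inequality $E_2 \le C E$ and the translation invariance of $D_{Che}$ this is equivalent to Bobkov's bound \eqref{eq:Bobkov} of Theorem~\ref{thm:Bobkov}.

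Given Theorem~\ref{thm:main-calc}, there is essentially no obstacle here: the content is entirely contained in that free-energy estimate together with the already-established transference machinery. The one subtlety is that a \emph{single} smallness requirement on $c'$ must do two jobs at once — place $w_0$ inside the admissible range $w \le c/(E_2 S)$ of Theorem~\ref{thm:main-calc}, and force the resulting entropy below $\tfrac12$ — which is precisely why we take $c' = \min(c, 1/(2C))$. (If one wishes to sidestep Pinsker's inequality, the total-variation bound can instead be extracted from the same optimization that produced \eqref{eq:optimize}, or by comparing the densities $1$ and $\exp(Z_K(w_0) - \tfrac{w_0}{2}|x|^2)$ directly on the annulus where $\mu_K$ is concentrated; but invoking Pinsker is the shortest route.)
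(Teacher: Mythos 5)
Your proposal is correct and follows essentially the same route as the paper: choose $c'=\min(c,1/(2C))$ and combine Theorem~\ref{thm:main-calc} with \eqref{eq:H-formula} for the entropy bound, invoke the Csisz\'ar--Kullback--Pinsker inequality for the total-variation bound, and apply the relative-entropy transference of Theorem~\ref{thm:transfer} together with the Gaussian concentration $D_{Exp_2}(\gamma_K^{w_0})\geq\sqrt{w_0/2}$ from Theorem~\ref{thm:estimates}, exactly as in the discussion preceding Theorem~\ref{thm:main-calc}. The constant bookkeeping and the final reduction to \eqref{eq:Bobkov2} are handled correctly.
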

\begin{proof}
Theorem \ref{thm:main-calc} and (\ref{eq:H-formula}) guarantee that setting $c ' = \min(c,1/(2C))$, where $c,C>0$ are given in Theorem \ref{thm:main-calc}, the bound on  $H(\mu_K | \gamma_K^{w_0})$ follows. The bound on $d_{TV}(\mu_K , \gamma_K^{w_0})$ follows by the Csisz\'ar--Kullback--Pinsker inequality (e.g. \cite{Ledoux-Book}), stating that for any two Borel probability measures $\mu_1,\mu_2$:
\[
d_{TV}(\mu_1,\mu_2) \leq \sqrt{ H(\mu_2 | \mu_1) / 2} ~.
\] 
Bobkov's lower bound on $D_{Che}(K)$ follows by the discussion preceding the statement of Theorem \ref{thm:main-calc}, employing the transference principle given by Theorem \ref{thm:transfer} and the well-known bound $D_{Che}(\gamma_K^{w})\geq c''' \sqrt{w}$ stated in Theorem \ref{thm:estimates}. 
\end{proof}

\subsection{Main Calculation}

For the proof of Theorem \ref{thm:main-calc}, we require the following:
\begin{lem} \label{lem:lc}
There exists a universal constant $c_1 := \log(3)/4 > 0$ so that for any convex bounded domain $K$ in $(\Real^n,|\cdot|)$:
\[
\mu_K\set{|x| \leq r } \leq \exp\brac{-c_1 \frac{E_2 - r}{S}}  \;\;\;  \forall r \in [0,E_2 - 3 S] ~.
\]
\end{lem}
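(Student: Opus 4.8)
The plan is to reduce the estimate, via the Brunn--Minkowski inequality, to a one-dimensional secant-line bound for the distribution function of $|X|$, and then extract the exponential decay using Chebyshev's inequality at two standard deviations. Write $R := |X|$ and $F(r) := \mu_K\set{|x| \le r}$, so that $F(r)$ is the normalized volume of $K \cap B(r)$. The first step is to observe that, for $0 \le s \le t$ and $\lambda \in [0,1]$, convexity of $K$ together with the identity $\lambda B(s) + (1-\lambda)B(t) = B(\lambda s + (1-\lambda)t)$ yields $\lambda(K \cap B(s)) + (1-\lambda)(K \cap B(t)) \subseteq K \cap B(\lambda s + (1-\lambda)t)$; hence, by Brunn--Minkowski, the function $h(r) := F(r)^{1/n}$ is concave and nondecreasing on $[a,b]$, where $a := \mathrm{dist}(0,K)$ and $b := \sup_{x \in K}|x|$, with $h(a) = 0$, $h(b) = 1$, and $F(r) = 0$ for $r \le a$.

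Next I would pin down two quantiles: since $F$ is continuous (the uniform measure on $K$ charges no sphere), there exist $a < r_- < r_+ < b$ with $F(r_-) = 1/4$ and $F(r_+) = 3/4$. Applying Chebyshev's inequality to $R$, whose mean is $E := \E(|X|)$ and whose standard deviation is $S$, gives $\mu_K\set{|R - E| \ge 2S} \le 1/4$, so $F(E - 2S) \le 1/4$ and $F(E + 2S) \ge 3/4$, i.e.\ $r_- \ge E - 2S$ and $r_+ \le E + 2S$, and in particular $r_+ - r_- \le 4S$. Moreover $E_2^2 = E^2 + S^2 \le (E + S)^2$, so $E \ge E_2 - S$, and therefore $r_- \ge E_2 - 3S$.

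The heart of the argument is the secant-line bound. Fix $r$ with $0 \le r \le E_2 - 3S$; if $r < a$ then $F(r) = 0$ and there is nothing to prove, so assume $a \le r \le E_2 - 3S \le r_-$. Concavity of $h$ on $[a,b]$ applied to the three points $r \le r_- < r_+$ — equivalently, the fact that the chord through $(r_-, (1/4)^{1/n})$ and $(r_+, (3/4)^{1/n})$ lies on or above the graph of $h$ at the abscissa $r$ — gives
\[
h(r) \;\le\; \Big(\tfrac14\Big)^{1/n}\left(1 - \big(3^{1/n} - 1\big)\,\frac{r_- - r}{r_+ - r_-}\right) ~.
\]
Raising to the power $n$, using $1 - x \le e^{-x}$ together with $n(3^{1/n} - 1) \ge \log 3$, and then $r_+ - r_- \le 4S$ and $r_- - r \ge (E_2 - r) - 3S$, we obtain
\[
F(r) = h(r)^n \;\le\; \tfrac14 \exp\!\left(-\frac{\log 3}{4} \cdot \frac{r_- - r}{S}\right) \;\le\; \frac{3^{3/4}}{4}\,\exp\!\left(-\frac{\log 3}{4} \cdot \frac{E_2 - r}{S}\right) ~,
\]
and since $3^{3/4}/4 < 1$ this is exactly the asserted bound with $c_1 = \log 3/4$.

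The routine ingredients are the Brunn--Minkowski reduction and the Chebyshev estimate; the step requiring care is the secant-line argument — one must check that the three abscissae $r \le r_- < r_+$ really lie in the concavity interval $[a,b]$ (this is what forces the separate, trivial treatment of $r < a = \mathrm{dist}(0,K)$), verify that the right-hand side of the displayed inequality for $h(r)$ stays nonnegative on the range of $r$ considered (which is automatic, since it dominates $h(a) = 0$ and is increasing in $r$) so that $1 - x \le e^{-x}$ may be applied, and track the direction of each inequality when the secant through the two quartile points is extended to the left of $r_-$. It is worth noting that the numerology is tight: the $3 = (1 - \tfrac14)/\tfrac14$ and the factor $4 = 2/\sqrt{1/4}$ coming from Chebyshev at two standard deviations are precisely what produce the constant $\log 3/4$, while the modest slack $3^{3/4}/4 < 1$ absorbs the ``$-3S$'' restriction on $r$.
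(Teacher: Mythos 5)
Your proof is correct and follows essentially the same route as the paper's: Brunn--Minkowski concavity of $r \mapsto \mu_K\{|x|\le r\}^{1/n}$, Chebyshev quartile bounds at $E \pm 2S$, a secant-line extrapolation to the left, and the conversion $E_2 \le E+S$. The only (cosmetic) differences are that you obtain the concavity from the direct Minkowski-sum inclusion rather than the paper's $(n+1)$-dimensional slicing construction, and that you work with $1/n$-concavity directly (via $1-x\le e^{-x}$ and $n(3^{1/n}-1)\ge\log 3$) where the paper passes to log-concavity.
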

\begin{proof}
Note that by the Brunn--Minkowski inequality (e.g. \cite{GardnerSurveyInBAMS}), the function $r \mapsto \log \mu_K\set{|x| \leq r}$ is concave. Indeed, denoting $K_\infty := \Real \times K$ and $L_\infty = \set{(r,x) \in \Real \times \Real^n \;  ; \; r\geq 0 \; , \; |x| \leq r}$, then $\mu_K\set{|x| \leq r} = \vol(K_+ \cap H_r) / \vol(K) $, where $K_+$ is the convex set $K_\infty \cap L_\infty$ and $H_r$ is the hyperplane $\set{r} \times \Real^n$ in $\Real^{n+1}$. Consequently, the function $r \mapsto \brac{\mu_K\set{|x| \leq r}}^{1/n}$ is concave, and in particular, the asserted log-concavity of  $r \mapsto \mu_K\set{|x| \leq r}$ follows. 

Next, note that by Chebyshev's inequality:
\[
\mu_K\set{|x| \leq E - 2S} \leq \frac{1}{4} \;\; ,\;\; \mu_K\set{|x| \leq E + 2S} \geq \frac{3}{4} ~.
\]
The log-concavity of $r \mapsto \mu_K\set{|x| \leq r}$ consequently implies:
\[
\mu_K\set{|x| \leq E - 2S - t} \leq \frac{1}{4} \exp\brac{- \frac{\ln(3)}{4} \frac{t}{S}} \;\;\; \forall t  \geq 0 ~,
\]
or equivalently:
\[
\mu_K\set{|x| \leq r} \leq \frac{1}{4} \exp \brac{- \frac{\ln(3)}{4} \brac{\frac{E-r}{S} - 2} } \;\;\; \forall r \leq E - 2 S ~.
\]
Observing that $E_2^2 = E^2 + S^2$ and hence $E_2 \leq E + S$, it follows that:
\[
\mu_K\set{|x| \leq r} \leq \frac{1}{4} \exp\brac{ 3\frac{\ln(3)}{4} } \exp \brac{- \frac{\ln(3)}{4} \frac{E_2-r}{S} } \;\;\; \forall r \leq E_2 - 3 S ~,
\]
and the assertion of the lemma follows by direct numerical inspection.
\end{proof}

\begin{proof}[Proof of Theorem \ref{thm:main-calc}]
First, we may assume that $E_2 \geq 3 S$, since otherwise the assertion follows trivially with $C = 3/2$. 
Next, since $Z_K(w) / w$ is decreasing, it is enough to prove the assertion for $w_0 = c_1 / (\alpha E_2 S)$, where $c_1$ is the constant from the above Lemma and $\alpha := 2$. 
Integrating by parts, evaluating separately the integral over the ranges $[0,E_2 - 3 S]$ and $[E_2 - 3 S,\infty)$, and using the Lemma, we have:
\begin{eqnarray*}
& & \int \exp(-\frac{w_0}{2} |x|^2) d\mu_K(x)  = \int_0^\infty w_0 r \exp(-\frac{w_0}{2} r^2) \mu_K\set{|x| \leq r}  dr \\
& \leq & \exp\brac{-\frac{w_0}{2} (E_2 - 3 S)^2}  + \int_0^{E_2 - 3 S} w_0 r \exp(-\frac{w_0}{2} r^2) \exp\brac{-c_1 \frac{E_2-r}{S}} dr ~.
\end{eqnarray*}
To evaluate the second term above, denoted $T_2$, we use the change of variables $r = \frac{c_1}{w_0 S} - t = \alpha E_2 - t$:
\begin{eqnarray*}
T_2 & = & \exp\brac{-c_1 \frac{E_2}{S} + \frac{c_1^2}{2 w_0 S^2}} \int_{0}^{E_2 - 3S} w_0 r \exp\brac{- \frac{1}{2} \brac{\sqrt{w_0} r - \frac{c_1}{\sqrt{w_0} S}}^2} dr    \\
&\leq & \exp\brac{-c_1 \frac{E_2}{S} + \frac{c_1^2}{2 w_0 S^2}}  \int_{E_2(\alpha - 1) + 3 S}^\infty \brac{\alpha w_0 E_2 - w_0 t} \exp(-\frac{w_0}{2} t^2) dt ~.
\end{eqnarray*}
Since $\alpha \geq 1$, we estimate the latter expression by:
\begin{eqnarray*}
& \leq  & \exp\brac{-c_1 \frac{E_2}{S} + \frac{c_1^2}{2 w_0 S^2}}  \brac{\frac{\alpha E_2}{E_2 (\alpha-1) + 3S}  - 1} \int_{E_2(\alpha - 1) + 3 S}^\infty w_0 t \exp(-\frac{w_0}{2} t^2) dt \\
& \leq & \exp\brac{-c_1 \frac{E_2}{S} + \frac{c_1^2}{2 w_0 S^2}} \frac{1}{\alpha-1} \exp\brac{ - \frac{w_0}{2} (E_2(\alpha - 1) + 3 S)^2 } ~.
\end{eqnarray*}
Recalling that $w_0 = c_1 / (\alpha E_2 S)$, this is the same as:
\[
= \exp\brac{c_1 \frac{E_2}{S} \brac{\frac{\alpha}{2} - 1}} \frac{1}{\alpha-1} \exp\brac{ - \frac{w_0}{2} (E_2(\alpha - 1) + 3 S)^2 } ~.
\]
It is easy to check that, in fact, for any $\alpha > 0$:
\[
 \exp\brac{c_1 \frac{E_2}{S} \brac{\frac{\alpha}{2} - 1}} \exp\brac{ - \frac{w_0}{2} (E_2(\alpha - 1))^2 } = \exp(-\frac{w_0}{2} E_2^2) ~,
 \]
 but this is most apparent for $\alpha = 2$ (which satisfies the requirement $\alpha \geq 1$ used above). Consequently:
 \[
 T_2 \leq  \frac{1}{\alpha - 1} \exp\brac{-\frac{w_0}{2} E_2 ^2} \leq \frac{1}{\alpha - 1} \exp\brac{-\frac{w_0}{2} (E_2 - 3 S)^2} ~,
 \] 
 and we conclude that for $w_0 =  c_1 / (2 E_2 S)$:
 \[
 \int \exp(-\frac{w_0}{2} |x|^2) d\mu_K(x) \leq 2 \exp\brac{-\frac{w_0}{2} (E_2 - 3 S)^2}  ~.
 \]
 It follows that:
 \begin{eqnarray*}
 \frac{Z_K(w_0)}{w_0} &=& -\frac{1}{w_0} \log\brac{ \int \exp(-\frac{w_0}{2} |x|^2) d\mu_K(x)} \\
 &\geq & \frac{1}{2} (E_2 - 3 S)^2 - \frac{2 \log(2)}{c_1} E_2 S \geq \frac{1}{2} E_2^2 - \brac{3 + \frac{\log(4)}{c_1}} E_2 S ~.
 \end{eqnarray*}
The assertion now follows from the monotonicity of $w \mapsto Z_K(w)/w$ with $c = c_1/2$ and $C = 3 + \frac{\log(4)}{c_1}$.
\end{proof}

\begin{rem}
To appreciate the delicate nature of the bound (\ref{eq:main-calc}) obtained in Theorem \ref{thm:main-calc}, note that the requirement that $\alpha > 1$ was crucially used in the proof. 
In particular, we do \emph{not} know how to extend (\ref{eq:main-calc}) to the following statement, which is perhaps suggested by the heuristic argument outlined in the Introduction:
\[
\forall c > 0 \;\;\; \exists C > 0 \;\;\; \text{ such that } \;\;\;
w \leq \frac{c}{E_2 S} \;\;\; \Rightarrow \;\;\; \frac{Z_K(w)}{w} \geq \frac{1}{2} E_2^2 - C E_2 S ~.
\]
Furthermore, note that our proof above crucially relied on the estimate $\int_a^\infty \exp(-\frac{t^2}{2}) dt \leq \frac{1}{a} \exp(-\frac{a^2}{2})$ valid for positive $a > 0$, and that the slightly rougher bound $\int_a^\infty \exp(-\frac{t^2}{2}) dt \leq C \exp(-\frac{a^2}{2})$ would have incurred an extraneous logarithmic term in the final result: $\frac{Z_K(w_0)}{w_0} \geq \frac{1}{2} E_2^2 - C E_2 S \log(E_2 / S)$. 
\end{rem}

\begin{rem}
A more delicate analysis of the proof above (e.g. using $\alpha = 5$) reveals that we may in fact assert the existence of a constant $c > 0$ so that:
\[
w \leq \frac{c}{E_2 S} \;\;\; \Rightarrow \;\;\; \frac{Z_K(w)}{w} \geq \frac{1}{2} (E_2 - 3 S)^2 ~.
\]
\end{rem}

\section{Sharpness of Free Energy Estimate} \label{sec:4}

Before concluding, we observe that the lower bound on the free energy we obtained in Theorem \ref{thm:main-calc} is in fact sharp in the following sense:

\begin{prop}[Free Energy Upper Bound] \label{prop:sharp}
There exist two universal constants $c , C > 0$ so that for any convex bounded domain $K$ in $\Real^n$:
\[
w \geq \frac{C}{E_2 S} \;\;\; \Rightarrow \;\;\; \frac{Z_K(w)}{w} \leq \frac{1}{2} E_2^2 - c E_2 S ~.
\]
\end{prop}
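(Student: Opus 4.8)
The plan is to bound $\int \exp(-\tfrac{w}{2}|x|^2) d\mu_K(x)$ from below by restricting the integral to a region where $\mu_K$ has substantial mass and $|x|$ is not too large, and to exploit the log-concavity of $r \mapsto \mu_K\set{|x| \leq r}$ established in Lemma \ref{lem:lc}. Concretely, since $\mu_K\set{|x| \leq E - 2S} \leq 1/4$ by Chebyshev, the measure assigns at least, say, mass $1/2$ to the shell $\set{E - 2S \leq |x| \leq E_2 + 10 S}$ (using $E_2 \leq E + S$ and a second Chebyshev bound at the upper end), and in fact log-concavity of the distribution function of $|X|$ forces a definite proportion of the mass to sit in a sub-shell of width a fixed multiple of $S$ around some radius $r_* \in [E - 2S, E+2S]$. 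On that sub-shell one has $|x|^2 \leq r_*^2 + C' r_* S + C'' S^2 \leq E_2^2 + C''' E_2 S$ (absorbing lower-order terms via $E_2 \simeq E$ and $S \leq E_2$, after reducing to the case $E_2 \geq 3S$ exactly as in the proof of Theorem \ref{thm:main-calc}), so that
\[
\int \exp\brac{-\tfrac{w}{2}|x|^2} d\mu_K(x) \geq \tfrac{1}{c_2} \exp\brac{-\tfrac{w}{2}\brac{E_2^2 + C''' E_2 S}}
\]
for a universal $c_2$. Taking $-\tfrac{1}{w}\log$ of both sides gives $Z_K(w)/w \leq \tfrac{1}{2} E_2^2 + \tfrac{C'''}{2} E_2 S + \tfrac{\log c_2}{w}$, which is not yet what we want; we instead need to extract a \emph{negative} correction $-c E_2 S$, so the crude shell bound above must be sharpened.

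The refinement is to not merely restrict to a shell but to keep the Gaussian weight genuinely effective: choose the truncation radius $\rho$ so that the Gaussian factor $\exp(-\tfrac{w}{2}|x|^2)$ on $\set{|x| \leq \rho}$ is comparable to $\exp(-\tfrac{w}{2}\rho^2)$ while $\mu_K\set{|x| \leq \rho}$ is still bounded below. Here one uses the assumption $w \geq C/(E_2 S)$: it guarantees that $w E_2 S$ is large, so that a window of radii of width $\asymp 1/(w E_2) \ll S$ below any fixed $r_* \simeq E$ carries, by the log-concavity (equivalently, by comparing $\mu_K\set{|x|\le r_*}$ and $\mu_K\set{|x| \le r_* - 1/(wE_2)}$, whose ratio is controlled by $c_1/S$ times the window width, hence bounded) a definite fraction of the mass near radius $r_*$. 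Picking $r_*$ to be the \emph{smallest} radius with $\mu_K\set{|x| \le r_*} \geq 1/4$ — so $r_* \leq E - 2S$ by Chebyshev — and integrating over that window yields
\[
\int \exp\brac{-\tfrac{w}{2}|x|^2}d\mu_K(x) \;\geq\; c_3 \exp\brac{-\tfrac{w}{2}(E-2S)^2} \;\geq\; c_3 \exp\brac{-\tfrac{w}{2}\brac{E_2^2 - 2 E_2 S}}
\]
(using $(E-2S)^2 \le E^2 - 2ES + \ldots \le E_2^2 - 2E_2 S + C_4 S^2$ and absorbing the $S^2$ term into the $-2E_2S$ at the cost of shrinking the constant, valid once $E_2 \geq C_5 S$). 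Taking $-\tfrac1w\log$ gives $Z_K(w)/w \leq \tfrac12 E_2^2 - E_2 S + \tfrac{|\log c_3|}{w}$, and since $w \geq C/(E_2 S)$ the error term is at most $\tfrac{|\log c_3|}{C} E_2 S$, which is $\leq \tfrac12 E_2 S$ once $C$ is chosen large enough; this yields the claim with $c = 1/2$. The case $E_2 < C_5 S$ is handled directly: then $E_2^2/2 \geq \tfrac12 E_2^2 - c E_2 S$ fails for small $c$ only if $cE_2 \le$ something, so one checks the trivial bound $Z_K(w) \ge 0$, i.e. $Z_K(w)/w \le \tfrac12 E_2^2 - cE_2S$ reduces to $cE_2 S \le \tfrac12 E_2^2$, automatic since $S \le E_2/(2c)$ — more carefully, one notes $Z_K(w)/w$ is always $\le \tfrac12 E_2^2$ by the fourth item of the first Lemma of Section \ref{sec:3}, and separately argues a strict gap using that $\mu_K$ is not a point mass.

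The main obstacle I anticipate is the same delicacy flagged in the Remark after Theorem \ref{thm:main-calc}: getting the \emph{right constant regime} — specifically, ensuring that the window of radii over which we integrate is simultaneously (i) wide enough that log-concavity pins a fixed fraction of mass in it, and (ii) narrow enough that $|x|^2$ does not exceed $(E-2S)^2$ by more than an $O(S^2)$ amount, so the Gaussian factor is not lost. These two requirements are compatible precisely because $w \ge C/(E_2 S)$ makes $1/(w E_2)$ small compared to $S$; quantifying this trade-off cleanly, and correctly tracking how the additive $\tfrac{1}{w}\log(\text{const})$ term is dominated by $E_2 S$ under the hypothesis on $w$, is where the argument needs care. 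A secondary technical point is the reduction to $E_2 \gtrsim S$, handled as in Theorem \ref{thm:main-calc}; and one should double-check that $r_* \le E - 2S$ can indeed be assumed strictly positive, i.e. that $E > 2S$, which again follows from $E_2 \geq C_5 S$ and $E_2 \leq E + S$.
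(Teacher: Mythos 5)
There is a genuine gap at the heart of your refinement: the claim that the smallest radius $r_*$ with $\mu_K\set{|x| \le r_*} \ge 1/4$ satisfies $r_* \le E - 2S$ ``by Chebyshev'' is backwards. Chebyshev's inequality gives $\mu_K\set{|x| \le E - 2S} \le 1/4$, an \emph{upper} bound on the left tail; it therefore pushes $r_*$ \emph{above} $E-2S$ (all one gets from the other tail is $r_* \le E + 2S$). Consequently your lower bound on the partition function can only take the form $c_3\exp\brac{-\tfrac{w}{2}(E+2S)^2}$, which produces a \emph{positive} correction $+O(E_2 S)$ to $\tfrac12 E_2^2$ --- exactly the insufficiency you correctly diagnosed in your first, ``crude shell'' attempt. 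The entire content of the proposition is that a definite fraction of the mass sits at radius $\le E - c_0 S$, i.e.\ strictly \emph{below} the mean of $|X|$ by a fixed multiple of its standard deviation, and no Chebyshev-type tail estimate can deliver this; for general (non-log-concave) laws it is simply false (take $|X|$ equal to $E-\delta$ with probability $1-p$ and to $E+M$ with probability $p$: then $S \simeq M\sqrt{p}$ while $\delta \simeq S\sqrt{p} \ll S$, so no mass at all lies below $E - \sqrt{p}\,S$).

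The missing ingredient is precisely Lemma \ref{lem:reverse-Cheb} (the ``reverse Chebyshev'' inequality for log-concave measures): $\P(|X| \le E - c_0 S) \ge c_0$. Its proof is the real work of Section \ref{sec:4}: one first rules out concentration of the law of $|X|$ in a thin shell around radius $E$ via the reverse-H\"{o}lder inequality of Theorem \ref{thm:reverse-Holder} applied to the degree-two polynomial $|x|^2 - E^2$ with exponents $q=-1/4$ and $p=2$, and then combines $\E(|X|-E)=0$ with the log-concavity of $r \mapsto \P(|X| \le r)$ to show that the mass below $E - c_1 S$ cannot be too small. Once that lemma is available, the remainder of your outline is essentially the paper's argument, and is in fact simpler than what you describe: no window of width $1/(wE_2)$ is needed --- one writes directly $\int \exp(-\tfrac{w}{2}|x|^2)\, d\mu_K \ge c_0 \exp(-\tfrac{w}{2}(E_2 - c_0 S)^2)$ and absorbs the $\log(1/c_0)/w$ error term using $w \ge C/(E_2 S)$ with $C$ large.
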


The proof is immediate given the following lemma, asserting a reverse Chebyshev-type inequality for log-concave measures, which may be of independent (technical) interest:
\begin{lem} \label{lem:reverse-Cheb}
Given a log-concave probability measure $\mu$ on Euclidean space $(\Real^n,|\cdot|)$, let $X$ be a random vector distributed according to $\mu$ and set $E = \E(|X|)$ and $S = \S(|X|)$. There exists a universal constant $c_0  > 0$ so that: 
\begin{equation} \label{eq:reverse-Cheb}
\P(|X| \leq E - c _0 S) \geq c_0 ~.
\end{equation}
\end{lem}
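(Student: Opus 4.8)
The plan is to pass to the one-dimensional law of $|X|$ and reduce the statement to an ``$L^1$-versus-$L^2$'' comparison for that law. Write $Y:=|X|$; the case $S:=\S(Y)=0$ being trivial, assume $S>0$, put $E:=\E(Y)$, and let $M$ be a median of $Y$. Three ingredients will be used. First, $F(r):=\P(Y\le r)=\mu(rB)$ ($B$ the Euclidean unit ball) is a \emph{log-concave} function of $r$: this is the very lifting used in the proof of Lemma~\ref{lem:lc} --- $\mu(rB)$ is the $x$-marginal at height $r$ of the log-concave function $(r,x)\mapsto e^{-V(x)}\mathbf 1_{\{|x|\le r\}}$ on $\Real^{n+1}$ --- upon applying Pr\'ekopa's theorem in place of Brunn--Minkowski (for $\mu$ uniform on a convex body one may keep Brunn--Minkowski and get the stronger $(\cdot)^{1/n}$-concavity). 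Second, Borell's lemma gives $\E(Y^2)\le C_0E^2$ for a universal $C_0$ (the Khinchine-type inequality already used in the paper), hence $S\le\sqrt{C_0-1}\,E$; Chebyshev's inequality then yields $|E-M|\le\sqrt2\,S$, and Borell's lemma applied to a symmetric ball centered at the origin further gives $M\simeq E$, in particular $M\ge c\,S$. Third, since $\mu(RB)\ge\tfrac23$ for $R:=E+\sqrt3\,S\le C_1E$, Borell's lemma applied to $RB$ supplies an exponential upper tail $\P(Y>u)\le A\,e^{-\lambda u}$ for $u\ge R$, with $A$ universal and $\lambda\ge\lambda_0/E$ universal.

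Next I reduce to proving $\E[(E-Y)_+]\ge c\,S$ for some universal $c>0$. Granting this, write
\[
c\,S\le\E[(E-Y)_+]=\E\bigl[(E-Y)\mathbf 1_{\{Y<E-c_0S\}}\bigr]+\E\bigl[(E-Y)\mathbf 1_{\{E-c_0S\le Y<E\}}\bigr];
\]
the second term is $\le c_0S$, and the first, by Cauchy--Schwarz and the trivial bound $\E[(E-Y)^2\mathbf 1_{\{Y<E\}}]\le\E[(Y-E)^2]=S^2$, is $\le S\sqrt{\P(Y\le E-c_0S)}$. Hence $\P(Y\le E-c_0S)\ge(c-c_0)^2$, and with $c_0:=c^2/4$ this is $\ge c^2(1-c/4)^2\ge c^2/4=c_0$, as wanted. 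To prove $\E[(E-Y)_+]\ge c\,S$ it suffices (transferring to the median, the discrepancy being controlled by $|E-M|\le\sqrt2\,S$ and the second-moment bound above) to bound $\E[(M-Y)_+]$ from below, and this follows from the \emph{no-spike estimate}
\[
\sup_{0\le r\le M}f_Y(r)\le\frac{C_2}{S},\qquad f_Y:=F',
\]
with $C_2$ universal: it forces $F(M-t)\ge\tfrac12-tC_2/S$, so the $\tfrac14$-quantile of $Y$ lies at distance $\ge S/(4C_2)$ below $M$, whence $\E[(M-Y)_+]=\int_0^\infty\P(Y<M-t)\,dt\ge\tfrac14\cdot S/(4C_2)$.

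The crux --- and the step I expect to be the main obstacle --- is the no-spike estimate, which genuinely exploits the \emph{radial} structure of $Y=|X|$ and not merely the log-concavity of $F$; indeed log-concavity of $F$ alone is insufficient, as one sees by taking $Y$ equal, with small probability $p$, to a standard exponential and otherwise to a thin uniform bump near $0$ (then $F$ is log-concave but $\E|Y-\E Y|/\S(Y)\to0$ as $p\to0$). The relevant structural input is that the density of $|X|$ has the form $f_Y(r)=r^{\,n-1}\varrho(r)$, with $\varrho(r)$ the average of $e^{-V(r\theta)}$ over unit vectors $\theta$ --- equivalently, that $F$ grows no faster than $\simeq r^{n}$ in a local sense, i.e.\ $F^{1/n}$ is concave at the relevant scale. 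I would split according to whether $\mathrm{dist}(0,\mathrm{supp}\,\mu)$ is small or large relative to the radial scale of $\mu$. If it is small, then $V$ is radially nondecreasing on $\mathrm{supp}\,\mu$, so $\varrho$ is nonincreasing; for a maximizer $r^{*}$ of $f_Y$ on $[0,M]$ this yields $f_Y(r)\ge(1-1/n)^{n-1}f_Y(r^{*})\ge e^{-1}f_Y(r^{*})$ throughout the interval $[\,r^{*}(1-1/n),\,r^{*}\,]$ of length $r^{*}/n$, and since $\int_0^{M}f_Y=\tfrac12$ this forces $f_Y(r^{*})\lesssim n/r^{*}\lesssim n/M$; one then closes with the matching lower bound $S\gtrsim M/n$ for $\mathrm{Var}\,|X|$, itself coming from the $r^{n-1}$ weight. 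If $\mathrm{dist}(0,\mathrm{supp}\,\mu)$ is large, then $|X|$ is uniformly well-approximated by the one-dimensional log-concave variable $\langle X,\theta_0\rangle$, $\theta_0$ the direction of the barycenter, and one invokes the classical one-dimensional reverse Chebyshev inequality for log-concave densities. Making the no-spike bound uniform in the dimension and in the location of the origin, and matching the two regimes, is the technical heart of the argument.
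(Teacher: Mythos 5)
Your reduction of the lemma to an anti-concentration statement is sound: the Cauchy--Schwarz step showing that $\E[(E-Y)_+]\geq cS$ implies $\P(Y\leq E-c_0S)\geq c_0$ is correct, and so is the observation that a bound of the form $\sup_{0\le r\le M} f_Y(r)\le C_2/S$ (or even just a single-scale small-ball bound near the median) would finish the job. The genuine gap is that this anti-concentration estimate --- which you yourself flag as ``the crux'' and ``the technical heart'' --- is never actually proved. Moreover, the sketch you give for it does not work as stated: when $\mathrm{dist}(0,\mathrm{supp}\,\mu)$ is small it is \emph{not} true that $V$ is radially nondecreasing on the support (convexity of $V$ forces radial monotonicity only along rays from a minimizer of $V$, which need not lie near the origin), so $\varrho$ need not be nonincreasing; and the matching lower bound $S\gtrsim M/n$ fails in general (e.g.\ for $\mu$ uniform on a small ball of radius $\rho$ at distance $R$ from the origin with $\rho\ll R/\sqrt{n}$), so the two regimes of your dichotomy do not obviously cover all cases, and the interpolation between them is left entirely open.

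What the paper does at exactly this point is to import the needed anti-concentration from the literature rather than derive it from the radial structure: it applies the reverse-H\"older (Carbery--Wright / Nazarov--Sodin--Volberg) inequality of Theorem \ref{thm:reverse-Holder} to the degree-two polynomial $f(x)=|x|^2-E^2$ under a log-concave measure, comparing $\norm{f(X)}_{-1/4}$ and $\norm{f(X)}_2$; together with Chebyshev and the lower bound $\norm{f(X)}_{2}\ge ES$ this yields $\P(\abs{\abs{X}-E}\le c_1S)\le 1/4$, which is precisely the quantile separation your argument requires. It then closes not via $\E[(E-Y)_+]$ but via the identity $\E(|X|-E)=0$ combined with the log-concavity of $r\mapsto\P(|X|\le r)$, which gives exponential decay of the lower tail in terms of $\eps=\P(|X|\le E-c_1S)$ and forces a contradiction if $\eps$ is too small. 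If you replace your unproved no-spike estimate by an appeal to such a polynomial anti-concentration result, your reduction would go through; as written, the proof is incomplete.
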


\begin{proof}[Proof of Proposition \ref{prop:sharp}]
Since $\mu_K$ is log-concave and $E \leq E_2$, we know by the previous lemma that:
\[
\mu_K \set{|x| \leq E_2 - c_0 S} \geq c_0 ~.
\]
Consequently, we may estimate:
\[
\int \exp(-\frac{w}{2} |x|^2) d\mu_K(x) \geq c_0 \exp(-\frac{w}{2} (E_2 - c_0 S)^2) ~, 
\]
and hence:
\[
\frac{Z_K(w)}{w} \leq \frac{1}{2} (E_2 - c_0 S)^2 + \frac{\log(1/c_0)}{w} \;\;\; \forall w > 0 ~.
\]
It follows that if $C > 0$ is chosen large enough, then:
\[
w \geq \frac{C}{E_2 S} \;\;\; \Rightarrow \;\;\; \frac{Z_K(w)}{w} \leq \frac{1}{2} E_2^2 - c E_2 S ~,
\]
where:
\[
c := 2 c_0 - c_0^2 - \frac{\log(1/c_0)}{C} > 0 ~.
\]
\end{proof}

It remains to establish Lemma \ref{lem:reverse-Cheb}. Surprisingly, we have not found a reference for it in the literature, perhaps due to the fact that the inequality (\ref{eq:reverse-Cheb}) goes in the opposite direction to the standard large-deviation or small-ball estimates for $|X|$, and so for completeness we provide a proof. It is clear for experts that the statement of the lemma basically reduces to the one-dimensional case using the localization technique, and so our task is ultimately one-dimensional. However, to make the argument as short as possible, we prefer to rely on the available tools in the literature, and do not make any attempts to obtain good numerical constants. We therefore turn to the known reverse-H\"{o}lder  Khinchine-type inequalities between positive and negative moments of $f(X) = |X|^2 - E^2$. Such inequalities between positive moments of \emph{semi-norms} are classical and readily follow by Borell's Lemma \cite{Borell-logconcave} (see e.g. \cite{Milman-Schechtman-Book}); these have been extended to $p$-th order moments for $p=0$ and $p \in (-1,0)$ in \cite{LatalaZeroMomentKhinchine,Guedon-extension-to-negative-p}. However, we require such moment inequalities for the more general function $f$, and thus turn to the reverse-H\"{o}lder inequalities initiated by Bourgain in \cite{Bourgain-LK} for polynomials, and extended using the localization technique by Bobkov\cite{BobkovPolynomials}, Carbery and Wright \cite{CarberyWrightPolynomials} and Nazarov, Sodin and Volberg \cite{NazarovSodinVolbergPolynomials}. For more on this topic, we refer to the excellent survey paper of Fradelizi \cite{FradeliziUltimateKhinchine}, which also extends these results even further.

\begin{thm}[\cite{NazarovSodinVolbergPolynomials,FradeliziUltimateKhinchine,BobkovNazarovUltimateKhinchine}] \label{thm:reverse-Holder}
Let $P$ denote a degree $d$ polynomial on $\Real^n$. Then for any $-1/d < q \leq  p < \infty$ and random-vector $X$ distributed according to a log-concave measure on $\Real^n$:
\[
\norm{P(X)}_{p} \leq C(p,q,d) \norm{P(X)}_{q} ~,
\]
for some (explicit) constant $C(p,q,d)>0$ depending solely on its arguments.
\end{thm}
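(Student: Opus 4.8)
The plan is to follow the classical route via the Lov\'asz--Simonovits localization technique, as developed by Kannan--Lov\'asz--Simonovits and applied to polynomial inequalities by Bobkov, Carbery--Wright and Nazarov--Sodin--Volberg (streamlined by Fradelizi): reduce the whole statement to a one-dimensional inequality about a degree--$\le d$ polynomial against a log-concave weight on a segment. First I would carry out the algebraic reductions. Since both sides are homogeneous of degree $1$ in $P$, one may normalise $P$ freely. Introduce the geometric mean $G_\mu(P) := \exp\brac{\int \log|P|\,d\mu}$. The theorem follows by composing two estimates: (LD) $\norm{P(X)}_p \le C(p,d)\,G_\mu(P)$ for every $p\in(0,\infty)$, and (SB) $G_\mu(P)\le C(q,d)\,\norm{P(X)}_q$ for every $q\in(-1/d,0)$ (the case $q\ge 0$ being Jensen's inequality, which also shows $\norm{P}_{1/d}\ge G_\mu(P)$); indeed then $\norm{P}_p \le C(p,d)C(q,d)\norm{P}_q$ for all $-1/d<q\le p<\infty$, and the well-definedness and positivity of $G_\mu(P)$ is part of what gets proved (one may argue with a truncation of $\log|P|$ and remove it at the end). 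By the layer-cake formula, (LD) and (SB) in turn follow from the distribution-function bounds:
\begin{itemize}
\item[(LD$'$)] $\mu\set{\,|P(X)| > t\,\norm{P}_{1/d}\,}\le C\exp(-c\,t^{1/d})$ for all $t\ge 1$;
\item[(SB$'$)] $\mu\set{\,|P(X)| < t\,\norm{P}_{1/d}\,}\le C(d)\,t^{1/d}$ for all $t>0$.
\end{itemize}
The exponent $1/d$ in (SB$'$) is exactly what makes $\int_0^1 |q|\,t^{q-1}\,C(d)\,t^{1/d}\,dt$ converge precisely when $q>-1/d$, which is the origin of the hypothesis on $q$.

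Next I would localize. Both (LD$'$) and (SB$'$) compare the $\mu$-measure of a super- or sub-level set of $|P|$ to a fixed positive moment of $|P|$ — that is, comparisons between \emph{linear} functionals of $\mu$ over the class of log-concave measures. This is precisely the situation the localization lemma handles: it reduces the task to proving (LD$'$) and (SB$'$), with the same constants, in the case where $\mu=\nu$ is a log-concave probability measure on a segment — density proportional to $\ell(t)^{n-1}$ for an affine $\ell\ge 0$, or a limiting log-affine density — and $P$ is replaced by its restriction $p(t):=P((1-t)a+tb)$, a polynomial of degree $\le d$.

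Then comes the one-dimensional core, where the degree bound is used. For (LD$'$): a one-dimensional log-concave probability measure $\nu$ has exponential tails beyond a bulk of some scale $\sigma$ around a point $m$; on the bulk $|p|$ is controlled polynomially in terms of its typical size, hence by $\norm{p}_{1/d}$, while away from the bulk $|p(t)|\le c_{\mathrm{lead}}(|t-m|+R)^d$, and integrating the exponential tail against a power of this polynomial yields the asserted decay. For (SB$'$): since $p+\lambda$ and $p-\lambda$ each have at most $d$ real roots, the sublevel set $\set{t:|p(t)|\le\lambda}$ is a union of at most $d$ intervals; near a root of multiplicity $k\le d$ one has $|p(t)|\asymp|t-\text{root}|^{k}$, so each component has length $\lesssim(\lambda/\norm{p})^{1/k}$ in the appropriate local scale, the worst case being a single interval around a root of multiplicity $d$; combining this with the fact that a log-concave measure cannot have $\norm{p}_{1/d}$ large unless $|p|$ is non-negligible on a set of non-negligible $\nu$-mass gives $\nu\set{|p|\le\lambda}\le C(d)(\lambda/\norm{p}_{L^{1/d}(\nu)})^{1/d}$.

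I expect the decisive difficulty to be the one-dimensional small-ball bound (SB$'$) with the \emph{sharp} exponent $1/d$: quantifying, uniformly over all one-dimensional log-concave $\nu$, that a union of at most $d$ intervals on which $|p|$ is small cannot carry too much $\nu$-mass relative to a positive moment of $|p|$. This is exactly the Carbery--Wright / Nazarov--Sodin--Volberg inequality, and I would follow their treatment (or Fradelizi's version): reduce to a single interval around a root of maximal multiplicity, estimate directly against the log-concave weight there, and reassemble the general sublevel set by splitting at its $\le 2d$ endpoints. By comparison, the large-deviation half and all the multidimensional-to-one-dimensional bookkeeping are routine.
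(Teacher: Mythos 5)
The paper gives no proof of this statement at all: Theorem \ref{thm:reverse-Holder} is imported verbatim from the cited works (Carbery--Wright, Nazarov--Sodin--Volberg, Bobkov, Fradelizi) and used as a black box in the proof of Lemma \ref{lem:reverse-Cheb}, so there is no internal argument to compare yours against. Measured against the cited literature, your outline is the standard and correct route: reduce by layer-cake to the two distributional estimates (large deviations with decay $\exp(-ct^{1/d})$ and the small-ball bound with the sharp exponent $t^{1/d}$, the latter being exactly what forces $q>-1/d$), localize \`a la Lov\'asz--Simonovits to log-concave needles, and prove both estimates for a degree-$d$ polynomial against a one-dimensional log-concave weight; this is precisely how Bobkov, Carbery--Wright and Nazarov--Sodin--Volberg proceed, and Fradelizi's survey organizes it in essentially the form you describe. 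Two points are glossed over and would need care in a full write-up: (i) the bookkeeping among the normalizations $G_\mu(P)$, $\norm{P}_{1/d}$ and $\norm{P}_{1}$ — the small-ball estimate itself is what makes these comparable up to $C(d)$, so the order in which you establish (LD), (SB), (LD$'$), (SB$'$) must avoid circularity (it can, since the small-ball bound with $\norm{P}_1$-normalization implies the $\norm{P}_{1/d}$-normalized one and also the lower bound on $G_\mu$); and (ii) the one-dimensional large-deviation step tacitly uses a Remez/Chebyshev-type inequality to relate the sup of $|p|$ on the bulk to its $L^{1/d}(\nu)$ norm, and your local statement ``$|p(t)|\asymp|t-\text{root}|^{k}$ near a root of multiplicity $k$'' is only a heuristic for the genuine Carbery--Wright argument, which you rightly identify as the decisive difficulty and defer to the sources. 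Since the theorem functions in this paper purely as a quoted tool, deferring that step is consistent with the paper's own treatment.
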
 

\noindent
Here and below we use $\norm{Y}_{p}$ to denote $\E(|Y|^p)^{\frac{1}{p}}$ (with the usual interpretation when $p=0$, which in any case we will not require). 

\begin{proof}[Proof of Lemma \ref{lem:reverse-Cheb}]
By the union bound and Markov-Chebyshev inequality:
\begin{eqnarray*}
\P( \abs{ \abs{X} - E} \leq c_1 S ) &\leq & \P(|X| \geq 8 E ) + \P(\abs{\abs{X}- E} \leq c_1 S  \wedge |X| \leq 8 E ) \\
& \leq  & 1/8 + \P(\abs{|X|^2 - E^2} \leq 9 c_1 S E ) ~.
\end{eqnarray*}
The function $f(x) = |x|^2 - E^2$ is a degree two polynomial, and so applying Theorem \ref{thm:reverse-Holder} with $q = -1/4$ and $p=2$, preceded by Chebyshev's inequality, we have:
\[
\P(\abs{|X|^2 - E^2} \leq 9 c_1 S E ) \leq (9 c_1 S E)^{1/4} \norm{f(X)}_{-1/4}^{-1/4} \leq C' (9 c_1 S E)^{1/4} \norm{f(X)}_{2}^{-1/4} ~. 
\]
Since:
\[
\norm{f(X)}_{2} \geq E \norm{|X| - E}_{2} = E S ~,
\]
we see that choosing the constant $c_1 > 0$ small enough, we can ensure that:
\[
\P( \abs{ \abs{X} - E} \leq c_1 S) \leq 1/4 ~.
\]

Now set $\eps := \P(\abs{X} \leq E - c_1 S)$, and observe that:
\begin{eqnarray} 
\nonumber 0 =\E(|X| - E) &\geq& \brac{\P(\abs{X} \geq E + c_1 S) - P(\abs{X} \leq E + c_1 S)} c_1 S - \int_{c_1 S}^{E} \P(\abs{X} \leq E - r) dr \\
\label{eq:almost} & \geq & S \brac{(1/2 - 2 \eps) c_1 - \int_0^\infty \P(\abs{X} \leq E - (c_1 + t) S) dt } ~.  
\end{eqnarray}
Since $\P(\abs{X} \leq E + 2 S) \geq 3/4$ by Chebyshev's inequality, we may argue exactly as in the proof of Lemma \ref{lem:lc} that the log-concavity of the function $r \mapsto \P(\abs{X} \leq r)$ ensures that:
\[
\P(\abs{X} \leq E - (c_1+t) S) \leq \eps \exp\brac{- \frac{\log 3/4 - \log \eps}{2+c_1} t} \;\;\; \forall t \geq 0 ~.
\]
Plugging this into (\ref{eq:almost}), we obtain:
\[
0 \geq (1/2 - 2 \eps) c_1 - \eps \frac{2 + c_1}{\log 3/4 - \log \eps} ~.
\]
However, this is clearly impossible if $\eps \geq 0$ is too small, and so $\eps$ must be bounded from below by some universal constant $c_2 > 0$. Setting $c_0 = \min(c_1,c_2)$, the asserted claim follows. 
\end{proof}

\setlinespacing{0.82}
\setlength{\bibspacing}{2pt}

\bibliographystyle{plain}
\bibliography{../../ConvexBib}

\def\cprime{$'$}
\begin{thebibliography}{10}

\bibitem{BakryEmery}
D.~Bakry and M.~{\'E}mery.
\newblock Diffusions hypercontractives.
\newblock In {\em S\'eminaire de probabilit\'es, XIX, 1983/84}, volume 1123 of
  {\em Lecture Notes in Math.}, pages 177--206. Springer, Berlin, 1985.

\bibitem{BakryLedoux}
D.~Bakry and M.~Ledoux.
\newblock L\'evy-{G}romov's isoperimetric inequality for an
  infinite-dimensional diffusion generator.
\newblock {\em Invent. Math.}, 123(2):259--281, 1996.

\bibitem{BobkovVarianceBound}
S.~Bobkov.
\newblock On isoperimetric constants for log-concave probability distributions.
\newblock In {\em Geometric aspects of functional analysis, Israel Seminar
  2004-2005}, volume 1910 of {\em Lecture Notes in Math.}, pages 81--88.
  Springer, Berlin, 2007.

\bibitem{BobkovFloridaProceedings}
S.~Bobkov.
\newblock On {M}ilman's ellipsoids and {M}-position of convex bodies.
\newblock In C.~Houdr\'e, M.~Ledoux, E.~Milman, and M.~Milman, editors, {\em
  Concentration, Functional Inequalities and Isoperimetry}, volume 545 of {\em
  Contemporary Mathematics}, pages 23--34. Amer. Math. Soc., 2011.

\bibitem{BobkovPolynomials}
S.~G. Bobkov.
\newblock Remarks on the growth of {$L\sp p$}-norms of polynomials.
\newblock In {\em Geometric aspects of functional analysis}, volume 1745 of
  {\em Lecture Notes in Math.}, pages 27--35. Springer, Berlin, 2000.

\bibitem{BobkovKappaConcaveMeasures}
S.~G. Bobkov.
\newblock Large deviations and isoperimetry over convex probability measures
  with heavy tails.
\newblock {\em Electron. J. Probab.}, 12:1072--1100 (electronic), 2007.

\bibitem{BobkovHoudre}
S.~G. Bobkov and C.~Houdr{\'e}.
\newblock Isoperimetric constants for product probability measures.
\newblock {\em Ann. Probab.}, 25(1):184--205, 1997.

\bibitem{BobkovNazarovUltimateKhinchine}
S.~G. Bobkov and F.~L. Nazarov.
\newblock Sharp dilation-type inequalities with fixed parameter of convexity.
\newblock {\em Zap. Nauchn. Sem. S.-Peterburg. Otdel. Mat. Inst. Steklov.
  (POMI)}, 351(Veroyatnost i Statistika. 12):54--78, 299, 2007.

\bibitem{Borell-logconcave}
Ch. Borell.
\newblock Convex measures on locally convex spaces.
\newblock {\em Ark. Mat.}, 12:239--252, 1974.

\bibitem{Borell-GaussianIsoperimetry}
Ch. Borell.
\newblock The {B}runn--{M}inkowski inequality in {G}auss spaces.
\newblock {\em Invent. Math.}, 30:207--216, 1975.

\bibitem{Bourgain-LK}
J.~Bourgain.
\newblock On the distribution of polynomials on high dimensional convex sets.
\newblock In {\em Geometric Aspects of Functional Analysis}, volume 1469 of
  {\em Lecture Notes in Mathematics}, pages 127--137. Springer-Verlag, 1991.

\bibitem{BrascampLiebPLandLambda1}
H.~J. Brascamp and E.~H. Lieb.
\newblock On extensions of the {B}runn-{M}inkowski and {P}r\'ekopa-{L}eindler
  theorems, including inequalities for log concave functions, and with an
  application to the diffusion equation.
\newblock {\em J. Func. Anal.}, 22(4):366--389, 1976.

\bibitem{BrenierMap}
Y.~Brenier.
\newblock Polar factorization and monotone rearrangement of vector-valued
  functions.
\newblock {\em Comm. Pure Appl. Math.}, 44(4):375--417, 1991.

\bibitem{BuserReverseCheeger}
P.~Buser.
\newblock A note on the isoperimetric constant.
\newblock {\em Ann. Sci. \'Ecole Norm. Sup. (4)}, 15(2):213--230, 1982.

\bibitem{CaffarelliContraction}
L.~A. Caffarelli.
\newblock Monotonicity properties of optimal transportation and the {FKG} and
  related inequalities.
\newblock {\em Comm. Math. Phys.}, 214(3):547--563, 2000.

\bibitem{CarberyWrightPolynomials}
A.~Carbery and J.~Wright.
\newblock Distributional and {$L^q$} norm inequalities for polynomials over
  convex bodies in {$\Bbb R^n$}.
\newblock {\em Math. Res. Lett.}, 8(3):233--248, 2001.

\bibitem{CheegerInq}
J.~Cheeger.
\newblock A lower bound for the smallest eigenvalue of the {L}aplacian.
\newblock In {\em Problems in analysis (Papers dedicated to Salomon Bochner,
  1969)}, pages 195--199. Princeton Univ. Press, Princeton, N. J., 1970.

\bibitem{CFM-BConjecture}
D.~Cordero-Erausquin, M.~Fradelizi, and B.~Maurey.
\newblock The ({B}) conjecture for the {G}aussian measure of dilates of
  symmetric convex sets and related problems.
\newblock {\em J. Funct. Anal.}, 214(2):410--427, 2004.

\bibitem{FradeliziUltimateKhinchine}
M.~Fradelizi.
\newblock Concentration inequalities for {$s$}-concave measures of dilations of
  {B}orel sets and applications.
\newblock {\em Electron. J. Probab.}, 14:no. 71, 2068--2090, 2009.

\bibitem{GHLBookEdition1}
S.~Gallot, D.~Hulin, and J.~Lafontaine.
\newblock {\em Riemannian geometry}.
\newblock Universitext. Springer-Verlag, Berlin, 1987.

\bibitem{GardnerSurveyInBAMS}
R.~J. Gardner.
\newblock The {B}runn-{M}inkowski inequality.
\newblock {\em Bull. Amer. Math. Soc. (N.S.)}, 39(3):355--405, 2002.

\bibitem{GromovMilmanLevyFamilies}
M.~Gromov and V.~D. Milman.
\newblock A topological application of the isoperimetric inequality.
\newblock {\em Amer. J. Math.}, 105(4):843--854, 1983.

\bibitem{Gromov-Milman}
M.~Gromov and V.~D. Milman.
\newblock Generalization of the spherical isoperimetric inequality to uniformly
  convex {B}anach spaces.
\newblock {\em Compositio Math.}, 62(3):263--282, 1987.

\bibitem{Guedon-extension-to-negative-p}
O.~Gu{\'{e}}don.
\newblock Kahane-khinchine type inequalities for negative exponent.
\newblock {\em Mathematika}, 46:165--173, 1999.

\bibitem{GuedonEMilmanInterpolating}
O.~Gu{\'{e}}don and E.~Milman.
\newblock Interpolating thin-shell and sharp large-deviation estimates for
  isotropic log-concave measures.
\newblock {\em Geom. Func. Anal.,}, 21(5):1043--1068, 2011.

\bibitem{HormanderL2EstimatesForComplexNeumannProblem}
L.~H{\"o}rmander.
\newblock {$L^{2}$} estimates and existence theorems for the {$\bar \partial
  $}\ operator.
\newblock {\em Acta Math.}, 113:89--152, 1965.

\bibitem{KLS}
R.~Kannan, L.~Lov{\'a}sz, and M.~Simonovits.
\newblock Isoperimetric problems for convex bodies and a localization lemma.
\newblock {\em Discrete Comput. Geom.}, 13(3-4):541--559, 1995.

\bibitem{KimEMilmanGeneralizedCaffarelli}
Y.-H. Kim and E.~Milman.
\newblock A generalization of caffarelli's contraction theorem via (reverse)
  heat flow.
\newblock to appear in Math. Annal., arxiv.org/abs/1002.0373, 2010.

\bibitem{LatalaZeroMomentKhinchine}
R.~Lata{\l}a.
\newblock On the equivalence between geometric and arithmetic means for
  log-concave measures.
\newblock In {\em Convex geometric analysis ({B}erkeley, {CA}, 1996)},
  volume~34 of {\em Math. Sci. Res. Inst. Publ.}, pages 123--127. Cambridge
  Univ. Press, Cambridge, 1999.

\bibitem{Ledoux-Book}
M.~Ledoux.
\newblock {\em The concentration of measure phenomenon}, volume~89 of {\em
  Mathematical Surveys and Monographs}.
\newblock American Mathematical Society, Providence, RI, 2001.

\bibitem{LedouxSpectralGapAndGeometry}
M.~Ledoux.
\newblock Spectral gap, logarithmic {S}obolev constant, and geometric bounds.
\newblock In {\em Surveys in differential geometry. Vol. IX}, pages 219--240.
  Int. Press, Somerville, MA, 2004.

\bibitem{LiYauEigenvalues}
P.~Li and S.T. Yau.
\newblock Estimates of eigenvalues of a compact {R}iemannian manifold.
\newblock In {\em Geometry of the Laplace operator (Proc. Sympos. Pure Math.,
  Univ. Hawaii, Honolulu, Hawaii, 1979)}, Proc. Sympos. Pure Math., XXXVI,
  pages 205--239. Amer. Math. Soc., Providence, R.I., 1980.

\bibitem{LSLocalizationLemma}
L.~Lov{\'a}sz and M.~Simonovits.
\newblock Random walks in a convex body and an improved volume algorithm.
\newblock {\em Random Structures Algorithms}, 4(4):359--412, 1993.

\bibitem{MazyaSobolevImbedding}
V.~G. Maz{\cprime}ja.
\newblock Classes of domains and imbedding theorems for function spaces.
\newblock {\em Dokl. Acad. Nauk SSSR}, 3:527--530, 1960.
\newblock Engl. transl. Soviet Math. Dokl., 1 (1961) 882--885.

\bibitem{MazyaCheegersInq1}
V.~G. Maz{\cprime}ja.
\newblock The negative spectrum of the higher-dimensional {S}chr\"odinger
  operator.
\newblock {\em Dokl. Akad. Nauk SSSR}, 144:721--722, 1962.
\newblock Engl. transl. Soviet Math. Dokl., 3 (1962) 808--810.

\bibitem{EMilmanRoleOfConvexityInFunctionalInqs}
E.~Milman.
\newblock On the role of convexity in functional and isoperimetric
  inequalities.
\newblock {\em Proc. London Math. Soc.}, 99(3):32--66, 2009.

\bibitem{EMilman-RoleOfConvexity}
E.~Milman.
\newblock On the role of convexity in isoperimetry, spectral-gap and
  concentration.
\newblock {\em Invent. Math.}, 177(1):1--43, 2009.

\bibitem{EMilmanGeometricApproachPartI}
E.~Milman.
\newblock Isoperimetric and concentration inequalities - equivalence under
  curvature lower bound.
\newblock {\em Duke Math. J.}, 154(2):207--239, 2010.

\bibitem{EMilmanGeometricApproachPartII}
E.~Milman.
\newblock Properties of isoperimetric, functional and transport-entropy
  inequalities via concentration.
\newblock to appear in Prob. Theor. Rel. Fields, arxiv.org/abs/0909.0207, 2010.

\bibitem{EMilmanIsoperimetricBoundsOnManifolds}
E.~Milman.
\newblock Isoperimetric bounds on convex manifolds.
\newblock In C.~Houdr\'e, M.~Ledoux, E.~Milman, and M.~Milman, editors, {\em
  Concentration, Functional Inequalities and Isoperimetry}, volume 545 of {\em
  Contemporary Mathematics}, pages 195--208. Amer. Math. Soc., 2011.

\bibitem{Milman-Schechtman-Book}
V.~D. Milman and G.~Schechtman.
\newblock {\em Asymptotic theory of finite-dimensional normed spaces}, volume
  1200 of {\em Lecture Notes in Mathematics}.
\newblock Springer-Verlag, Berlin, 1986.
\newblock With an appendix by M. Gromov.

\bibitem{NazarovSodinVolbergPolynomials}
F.~Nazarov, M.~Sodin, and A.~Vol{\cprime}berg.
\newblock The geometric {K}annan-{L}ov\'asz-{S}imonovits lemma, dimension-free
  estimates for the distribution of the values of polynomials, and the
  distribution of the zeros of random analytic functions.
\newblock {\em Algebra i Analiz}, 14(2):214--234, 2002.

\bibitem{PayneWeinberger}
L.~E. Payne and H.~F. Weinberger.
\newblock An optimal {P}oincar\'e inequality for convex domains.
\newblock {\em Arch. Rational Mech. Anal.}, 5:286--292, 1960.

\bibitem{SudakovTsirelson}
V.~N. Sudakov and B.~S. Cirel{\cprime}son~[Tsirelson].
\newblock Extremal properties of half-spaces for spherically invariant
  measures.
\newblock {\em Zap. Nau\v cn. Sem. Leningrad. Otdel. Mat. Inst. Steklov.
  (LOMI)}, 41:14--24, 165, 1974.
\newblock Problems in the theory of probability distributions, II.

\bibitem{ZhongYangImprovingLiYau}
J.~Q. Zhong and H.~C. Yang.
\newblock On the estimate of the first eigenvalue of a compact {R}iemannian
  manifold.
\newblock {\em Sci. Sinica Ser. A}, 27(12):1265--1273, 1984.

\end{thebibliography}

\end{document}